\newtheorem{theorem}{Theorem}[section]
\newtheorem{lemma}[theorem]{Lemma}
\newtheorem{proposition}[theorem]{Proposition}
\newtheorem*{theorem*}{Theorem}
\newtheorem{corollary}[theorem]{Corollary}
\newtheorem{question}{Question}
\numberwithin{equation}{section}
\begin{document}

\title{Hyperplane Equipartitions Plus Constraints}
\author{Steven Simon}%


\date{}

\maketitle

\begin{abstract}

While equivariant methods have seen many fruitful applications in geometric combinatorics, their inability to answer the now settled Topological Tverberg Conjecture has made apparent the need to move beyond the use of Borsuk--Ulam type theorems alone. This impression holds as well for one of the most famous problems in the field, dating back to 1960, which seeks the minimum dimension $d:=\Delta(m;k)$ such that any $m$ mass distributions in $\mathbb{R}^d$ can be simultaneously equipartitioned by $k$ hyperplanes. Precise values of $\Delta(m;k)$ have been obtained in few cases, and the best-known general upper bound $U(m;k)$ typically far exceeds the conjectured-tight lower bound arising from degrees of freedom. Following the ``constraint method'' of Blagojevi\'c, Frick, and Ziegler originally used for Tverberg-type results and recently to the present problem, we show how the imposition of further conditions -- on the hyperplane arrangements themselves (e.g., orthogonality, prescribed flat containment) and/or the equipartition of additional masses by successively fewer hyperplanes (``cascades") -- yields a variety of optimal results for constrained equipartitions of $m$ mass distributions in dimension $U(m;k)$, including in dimensions \textit{below} $\Delta(m+1;k)$, which are still extractable via equivariance. Among these are families of exact values for full orthogonality as well as cascades which maximize the ``fullness" of the equipartition at each stage, including some strengthened equipartitions in dimension $\Delta(m;k)$ itself. 

\end{abstract}

\section{Introduction}

\subsection{Historical Summary}

	With the recent negative resolution [3] to the Topological Tverberg Conjecture [2], perhaps the most famous remaining open question in topological combinatorics is the hyperplane mass equipartition problem, originating with Gr\"unbaum [12] in 1960 and generalized by Ramos [18] in 1996:
	
	\begin{question}\label{q:1} [Gr\"unbaum--Ramos]  What is the minimum dimension $d:=\Delta(m,k)$ such that any $m$ mass distributions $\mu_1,\ldots, \mu_m$ on $\mathbb{R}^d$ can be simultaneously equipartitioned by $k$ hyperplanes? \end{question}  
	
		By a mass distribution $\mu$ on $\mathbb{R}^d$, one means a positive, finite Borel measure such that any hyperplanes has measure zero (e.g., if $\mu$ is absolutely continuous with respect to Lebesgue measure). To say that $k$ hyperplanes $H_1,\ldots, H_k$ equipartition $\mu$ means that
\begin{equation}\label{eq:1.1} \mu(\mathcal{R}_g)= \frac{1}{2^k}\mu(\mathbb{R}^d)\end{equation} 
\noindent for all $g=(g_1,\ldots, g_k) \in \mathbb{Z}_2^{\oplus k}$, where the $\mathcal{R}_g:=H_1^{g_1}\cap \cdots \cap H_k^{g_k}$  are the regions obtained by intersecting the resulting half-spaces $H_i^0:=\{\mathbf{u}\in \mathbb{R}^d\mid \langle \mathbf{u},\mathbf{a}_i\rangle\geq b_i\}$ and $H_i^1:=\{\mathbf{u}\in \mathbb{R}^d\mid \langle \mathbf{u},\mathbf{a}_i\rangle\leq b_i\}$ determined by each hyperplane $H_i=\{\mathbf{u}\in \mathbb{R}^d\mid \langle \mathbf{u},\mathbf{a}_i\rangle=b_i\}$, $(\mathbf{a}_i,b_i)\in S^{d-1}\times\mathbb{R}$. Note that equipartitoning hyperplanes are distinct (and affine independent), lest $\mathcal{R}_g=\emptyset$ for some $g\in \mathbb{Z}_2^{\oplus k}$.

	The lower bound \begin{equation}\label{eq:1.2} k\Delta(m;k) \geq m(2^k-1) \end{equation} was proved by Ramos [18] via a generalization of a moment curve argument of Avis [1] for $m=1$, and the conjecture $\Delta(m;k)=L(m;k):=\left\lceil \frac{m(2^k-1)}{k}\right\rceil$ posited there has been confirmed for all known values of $\Delta(m;k)$. Owing to the reflective and permutative symmetries on $k$ hyperplanes, there is a natural action of the wreath product $\mathfrak{S}_k^\pm:=\mathbb{Z}_2\wr \mathfrak{S}_k$ on each collection of regions, $\mathfrak{S}_k$ being the symmetric group, so that upper bounds on $\Delta(m;k)$ have been obtained via equivariant topology. Using the ubiquitous ``Configuration-Space/Test-Map (CS/TM) paradigm" formalized by \v Zivaljevi\'c (see, e.g., [24]), any collection of $k$ equipartitoning hyperplanes can be identified with a zero of an associated continuous $\mathfrak{S}_k^\pm$-map $f: X\rightarrow V$, where $X$ is either the $k$-fold product or join of spheres and $V$ is a certain $\mathfrak{S}_k^\pm$-module (see Section 3 for a review of this construction). In favorable circumstances the vanishing of such maps is guaranteed by Borsuk--Ulam type theorems which rely on the calculation of advanced algebraic invariants such as the ideal-valued index theory of Fadell--Husseini [11] or relative equivariant obstruction theory. Such methods have produced relatively few exact values of $\Delta(m;k)$, however, which at present are known for \\
\\
$\bullet$ all $m$ if $k=1$ (the well-known Ham Sandwich Theorem $\Delta(m;1)=m$),\\
$\bullet$ three infinite families if $k= 2$: $\Delta(2^{q+1}+ r;2)= 3\cdot 2^q + \lfloor{3r/2}\rfloor$, $r=-1,0,1$ and $q\geq 0$ [16, 5, 6],\\
$\bullet$ three cases if $k=3$: $\Delta(1;3)=3$ [13], $\Delta(2;3)=5$ [5], and $\Delta(4;3)=10$ [5], and\\
$\bullet$ \textit{no values of m} if $k\geq 4$.\\
\\
The currently best known general upper bound on $\Delta(m;k)$, \begin{equation} \label{eq:1.3} \Delta(m;k)\leq U(m;k):=2^{q+k-1}+r \,\,\, \text{for}\,\,\, m=2^q+r,\,\,\, 0\leq r<2^q, \end{equation}
relies only on $\mathbb{Z}_2^{\oplus k}$-equivariance rather than the full symmetries of $\mathfrak{S}^\pm_k$ and was given by Mani--Levitska, Vre\'cica, and \v Zivaljevi\'c in 2007 [16].  It is easily verified that $U(m;k)=L(m;k)$ follows from \eqref{eq:1.3} only when (a) $k=1$ or when (b)  $k=2$ and $m=2^{q+1}-1$, with a widening gap between $U(m;k)$ and $L(m;k)$ as $r$ tends to zero, and as either $q$ or $r$ increases. For instance, when $m=1$ and $k=4$ one already has $4\leq \Delta(1;4)\leq 8$, which can be compared to the best-known estimate $4\leq \Delta(1;4)\leq 5$ of [5]. 

\subsection{Constrained Equipartitions} Given the present state of the problem -- and particularly in light of the failure of analogous equivariant methods to settle the Topological Tverberg conjecture in general -- it is natural to  suppose that methods beyond Borsuk--Ulam type theorems alone are necessary to settle Question \ref{q:1}. In lieu of producing new values or improved upper bounds on $\Delta(m;k)$, however, this paper presents optimal hyperplane equipartitions -- under the imposition of further constraints -- which can still be extracted from the underlying equivariant topological schema.  Our approach follows the ``constraint method" of Blagojevi\'c, Frick, and Ziegler, used in the context of Question \ref{q:1} in [6] and originally in [4] to derive a variety of optimal Tverberg-type results (colored versions, dimensionally controlled results of Van-Kampen Flores-type, and others) as direct consequences of positive answers to the topological Tverberg Conjecture itself. Whenever any $m$ masses can be equipartitioned by $k$ hyperplanes in $\mathbb{R}^d$, we shall ask for further conditions which can be imposed in the same dimension, whether (1) on the arrangement of equipartitoning hyperplanes and/or (2) for further equipartitions (by fewer hyperplanes)  given additional masses. We shall be especially concerned with those constrained equipartitions which are (a) optimal with respect to the original Gr\"unbaum--Ramos problem, in the sense that $d<\Delta(m+1;k)$ (including when $d=\Delta(m;k)$) and/or (b) those which are tight in that the total number of equipartition and arrangement conditions matches the full $kd$ degrees of freedom for $k$ hyperplanes in $\mathbb{R}^d$ (see, e.g. [10] for a result in a similar spirit of the later). While a variety of interesting constraints along the lines of (1) and (2) can be imposed, we shall focus on (combinations of) the following three:

\begin{itemize} 
\item (i) (Orthogonality) Given a subset $\mathcal{O}\subseteq \{(r,s)\mid 1\leq r <s\leq k\}$, can one ensure that $H_r\perp H_s$ for each $(r,s)\in\mathcal{O}$? In particular, can one  guarantee that all the hyperplanes are pairwise orthogonal?
\item (ii) (Prescribed Flat Containment)  Given affine subspaces $A_1,\ldots, A_k$, can one ensure that $A_i\subseteq H_i$ for each $1\leq i \leq k$? In particular, can some or all of the $A_i$ be prescribed \textit{linear}, or, given a \textit{filtration} $A_1\subseteq \cdots \subseteq A_k$, can one guarantee that $A_i\subseteq(H_i\cap \cdots \cap H_k)$ for each $i$?
\item (iii) (``Cascades'') In addition to the full equipartition of any collection of $m=m_1$ masses by $H_1,\ldots, H_k$, can the hyperplanes be chosen so that $H_2,\ldots, H_k$ also equipartition any additional collection of $m_2$  prescribed masses, that $H_3,\ldots, H_k$  equipartition any further given collection of $m_3$ masses, and so on, until $H_k$ equipartitions (bisects) any additionally given $m_k$ masses? In particular, can one maximize the ``fullness" of the equipartition at each stage of the cascade so that, for any $1\leq i \leq k$ increasing $m_i$ to $m_i +1$ while simultaneously setting $m_j=0$ for all $j>i$ requires a corresponding increase in $d$?
\end{itemize}

\noindent \textbf{Example 1.} Although $\Delta(2^{q+1}-1;2)=3\cdot 2^q -1$, this result is not tight because there is still one remaining degree of freedom in \eqref{eq:1.2}. Thus one can ask that (i) the equipartitioning hyperplanes are orthogonal, or that (iii) one of the hyperplanes bisects any further prescribed mass (and so, by considering a ball with uniform density, can be made to (ii) contain any prescribed point). That either of these conditions can always be imposed is stated in the second line of Theorem \ref{thm:1.1}  below.\\
\\
\noindent \textbf{Example 2.} For a low dimensional  cascade, consider an equipartition of a single mass $\mu_1$ by three hyperplanes $H_1,H_2$ and $H_3$. Theorem \ref{thm:2.1} below shows that requiring $H_2$ and $H_3$ to equipartition any second prescribed mass $\mu_2$ requires $d\geq 4$, and since $\Delta(1;3)=3$ while $\Delta(2;3)=5$, such an equipartition in dimension $U(1;3)=4$ would be optimal with respect to the original Gr\"unbaum--Ramos Problem. To ensure tightness in \eqref{eq:2.1}, one could stipulate that $H_3$ bisects any given two further masses $\mu_3$ and $\mu_4$. Such a cascade would be maximal at each stage, and, as $H_3$ would bisect each of the four masses, would represent a strengthened Ham Sandwich Theorem in $\mathbb{R}^4$ (Corollary \ref{cor:6.3} below gives a more general extension of that theorem whenever $d$ is a power of two). As a mix of (i) and (iii) above, one could require instead of the bisection of $\mu_3$ and $\mu_4$ by $H_3$ that the later be orthogonal to each of $H_1$ and $H_2$.  The existence of these constrained equipartitions, as well as the optimal result that any mass in $\mathbb{R}^4$ can be equipartitioned by three pairwise orthogonal hyperplanes, is given in the fourth line of Theorem \ref{thm:1.1}. \\

	Consolidating the three constraints above, one has the following generalization of Question \ref{q:1}, which is recovered when $\mathbf{a}=\mathbf{0}$ and $\mathcal{O}=\emptyset$:

\begin{question} \label{q:2} Let $\mathbf{m}=(m_1,\ldots, m_k)\in \mathbb{N}^k$ with $m_1\geq 1$, let $\mathbf{a}=(a_1,\ldots, a_k)\in \mathbb{N}^k$, and let $\mathcal{O}\subseteq \{(r,s)\in \mathbb{Z}_+^2 \mid 1\leq r <s\leq k\}$. What is the minimum dimension $d:=\Delta(\mathbf{m},\mathbf{a},\mathcal{O}; k)$ such that, given any family of $m=\sum_{i=1}^k m_i$ measures $\{\mu_{i,j}\}_{1\leq i\leq k, 1\leq j \leq m_i}$ in $\mathbb{R}^d$ and given any $k$ affine subspaces $A_1,\ldots, A_k$, $-1\leq \dim(A_i)=a_i-1$ for all $1\leq i \leq k$, there exist $k$ hyperplanes $H_1,\ldots,H_k$ such that \begin{itemize} \item(i) $H_r\perp H_s$ for all $(r,s)\in \mathcal{O}$, \item (ii) $A_i\subseteq H_i$ for all $1\leq i \leq k$, and \item (iii) $H_i,\ldots, H_k$ equipartitions $\mu_{i,1},\ldots, \mu_{i,m_i}$ for each $1\leq i \leq k$? \end{itemize} \end{question}

In the special case that $|\mathcal{O}|=\binom{k}{2}$, we shall let $\Delta^\perp(m;k):=\Delta((m,0,\ldots,0),\mathbf{0},\mathcal{O};k)$ denote the ``full orthogonal" generalization of Question \ref{q:1} previously considered in [7] and [18]. Likewise, we shall let $\Delta(\mathbf{m};k) = \Delta(\mathbf{m},\mathbf{0},\emptyset ;k)$ denote a ``pure" cascade, while $\Delta^\perp (\mathbf{m}; k)$ will denote a cascade with full orthogonality.

\subsection{Summary of Results} 

The remainder of the paper is structured as follows. In Section 2, we give (Theorem \ref{thm:2.1}) an extension of the Ramos lower bound for $\Delta(\mathbf{m},\mathbf{a},\mathcal{O};k)$.  Following a detailed review of the CS/TM paradigm in Section 3 as previously applied to Question \ref{q:1}, we discuss in Section 4 its modifications to our constrained cases, the heart of which (as in [4]) lies in the imposition of further conditions on the target space. In Section 5, we show how a reduction trick (Lemma \ref{lem:5.1}) from [6] immediately implies that upper bounds $\Delta(m+1;k)\leq d+1$ obtained via that scheme produce upper bounds $\Delta^\perp(m;k)\leq d$ for full orthogonality (Theorem \ref{thm:5.2}), an observation we owe to Florian Frick.	This restriction technique does not produce cascades or specified affine containment, however, and answers to Question \ref{q:2} under a variety of these constraints, including partial or full orthogonality, are given in Section 6 using cohomological methods  (Theorems \ref{thm:6.2} and \ref{thm:6.4}, Propositions \ref{prop:6.5}--\ref{prop:6.6}). In all of these results one has $\Delta(\textbf{m},\textbf{a},\mathcal{O};k)=U(m_1;k)$, thereby strengthening the best-known upper bound $\Delta(m_1;k)\leq U(m_1;k)$ for the original problem, and unlike those of Theorem \ref{thm:5.2} in general, each is tight with respect to degrees of freedom. While a variety of examples are given in Section 7 (Corollaries \ref{cor:7.1}--\ref{cor:7.3}), we collect below a sampling of tight, optimal, and/or maximal constrained equipartitions using both approaches, as well as two additional estimates of interest.

\begin{theorem} \label{thm:1.1} Let $q\geq 0$ and let $\langle k \rangle^\perp = \{(r,k)\mid 1\leq r<k\}$.\\
$\bullet$ $\Delta^\perp(2^{q+1}; 2)=3\cdot 2^q+1$\\
$\bullet$ $\Delta^\perp(2^{q+1}-1;2)= \Delta((2^{q+1}-1,1);2) =3\cdot 2^q -1$\\
$\bullet$ $\Delta((2^{q+2}-2, 2);2)=\Delta^\perp((2^{q+2}-2),1); 2))=\Delta^\perp(2^{q+2}-2;2)=3\cdot 2^{q+1}-2$\\
$\bullet$ $\Delta((1,1,2);3)=\Delta((1,1,0),\langle 3\rangle^\perp;3)=\Delta^\perp(1;3)=4$\\
$\bullet$ $\Delta^\perp((2,1,2);3)=\Delta((2,2,2), \langle 3\rangle^\perp;3)=8$\\
$\bullet$ $6\leq \Delta^\perp (1;4) \leq 8$\, \text{and}\, $\Delta((1,1,2,1),\langle 4\rangle^\perp ;4)=8$\\
$\bullet$ $8\leq \Delta^\perp(3;3)\leq 9$\ \text{and}\, $\Delta((3,1,1), \langle 3 \rangle^\perp;3)=9$ 

\end{theorem}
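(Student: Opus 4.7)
The plan is to treat Theorem \ref{thm:1.1} as a catalogue: each bullet splits into a lower bound coming from the degrees-of-freedom count (the generalized Ramos-type inequality of Theorem \ref{thm:2.1}) and a matching upper bound obtained via one of the two schemes the paper has set up --- either the reduction trick of Lemma \ref{lem:5.1}/Theorem \ref{thm:5.2} for bounds involving only orthogonality, or the direct cohomological arguments of Section 6 (Theorems \ref{thm:6.2}, \ref{thm:6.4} and Propositions \ref{prop:6.5}--\ref{prop:6.6}) for the cascades and mixed constraints. The first step in every case is therefore to verify that the required degrees of freedom, namely $m(2^k-1)+|\mathcal{O}|+\sum a_i$ in the appropriate counted form, add up to $kd$ so that the lower bound of Theorem \ref{thm:2.1} matches the claimed value of $d$.

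For the first three bullets (full orthogonality of $k=2$ hyperplanes), I would invoke Theorem \ref{thm:5.2}: since the known families give $\Delta(2^{q+1}+1;2)=3\cdot 2^q+2$, $\Delta(2^{q+1};2)=3\cdot 2^q$, and $\Delta(2^{q+2}-1;2)=3\cdot 2^{q+1}-1$, the reduction trick immediately delivers the upper bounds $\Delta^\perp(2^{q+1};2)\le 3\cdot 2^q+1$, $\Delta^\perp(2^{q+1}-1;2)\le 3\cdot 2^q-1$, and $\Delta^\perp(2^{q+2}-2;2)\le 3\cdot 2^{q+1}-2$. The cascade equalities on the same lines (e.g. $\Delta((2^{q+1}-1,1);2)$ and $\Delta((2^{q+2}-2,2);2)$) should then follow by observing that each additional bisection condition on $H_2$ consumes exactly one degree of freedom in the same way an orthogonality condition does, so the cohomological argument in Section 6 applies verbatim to replace the $\mathcal{O}$-constraint with a cascade constraint.

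For the remaining bullets ($k=3,4$), the upper bounds live in the Mani--Levitska--Vre\'cica--\v Zivaljevi\'c dimensions $U(m_1;k)=2^{k-1}m_1$ (or $U(1;k)=2^{k-1}$ for $k=4$), and the task is to push further constraints into the same dimension. For each one I would set up the CS/TM scheme of Section 3 with target space enlarged by the additional summands coming from orthogonality pairs (for each $(r,s)\in\mathcal{O}$, one $\mathbb{Z}_2^{\oplus k}$-trivial line for the orthogonality equation) and from cascade masses (standard equipartition summands with the corresponding subgroup acting), and then check that the resulting $\mathfrak{S}_k^\pm$-equivariant map into a module of the correct rank has no equivariant extension from the test sphere, i.e. apply Theorem \ref{thm:6.2} or \ref{thm:6.4}. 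For example, $\Delta((1,1,2);3)=4$ should come from the $k=3$ cohomological argument in dimension $U(1;3)=4$ with the target augmented by the bisection space for $\mu_{2,1}$ and the double bisection space for $\mu_{3,1},\mu_{3,2}$; the mixed orthogonal versions $\Delta((1,1,0),\langle 3\rangle^\perp;3)$ and $\Delta^\perp(1;3)$ just trade cascade summands for the two orthogonality conditions of $\langle 3\rangle^\perp$, which contribute the same number of equations.

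The main obstacle will be the two non-matching lines $\Delta^\perp(1;4)$ and $\Delta^\perp(3;3)$, where degrees of freedom only yield $d\ge 6$ and $d\ge 8$ respectively while the upper bounds $8$ and $9$ come from $\Delta(1;4)\le 8$ and $\Delta(3;3)\le 9$ via Theorem \ref{thm:5.2} applied to non-tight inputs. Here I do not expect equality, and the proof reduces to honestly writing down the Ramos lower bound with the $\binom{k}{2}$ extra orthogonality conditions and the best available hyperplane upper bound: $\Delta(2;4)\le 9$ would give $\Delta^\perp(1;4)\le 8$, and $\Delta(4;3)=10$ gives $\Delta^\perp(3;3)\le 9$. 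The tight mixed cases $\Delta((1,1,2,1),\langle 4\rangle^\perp;4)=8$ and $\Delta((3,1,1),\langle 3\rangle^\perp;3)=9$ should, on the other hand, arise in the same dimension as exact applications of the cohomological framework, since the added cascade masses exactly saturate the remaining degrees of freedom $kd-(m_1(2^k-1)+|\mathcal{O}|)$ and so are forced by the obstruction-theoretic computation underlying Theorems \ref{thm:6.2}--\ref{thm:6.4}.
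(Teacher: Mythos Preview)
Your overall strategy matches the paper's: Theorem \ref{thm:1.1} is indeed a catalogue, with lower bounds from Theorem \ref{thm:2.1} and upper bounds from either the restriction trick (Theorem \ref{thm:5.2}) or the polynomial computations of Section~6 (via Corollaries \ref{cor:7.1}--\ref{cor:7.3}). Most of the bullets go through exactly as you outline.

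There is, however, a genuine gap in your treatment of $\Delta^\perp(1;4)\leq 8$. You propose to get this from Theorem \ref{thm:5.2} applied to ``$\Delta(2;4)\leq 9$,'' but that estimate is not known: the best available bound is $\Delta(2;4)\leq 10$, and the paper itself remarks (end of Section~7) that the restriction lemma yields only $\Delta^\perp(1;4)\leq 9$. The paper instead obtains $\Delta^\perp(1;4)\leq 8$ by a direct cohomological computation (Proposition \ref{prop:7.4}), exhibiting a specific target representation in dimension $8$ whose associated polynomial equals $u_1^8u_2^8u_3^8u_4^8$ in $\mathbb{Z}_2[u_1,\dots,u_4]/(u_i^9)$; this is not an instance of Theorems \ref{thm:6.2} or \ref{thm:6.4} either, but a separate hand calculation involving a Makeev-type condition on $H_2,H_3,H_4$. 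Your sketch does not supply this argument.

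Two smaller corrections are also worth noting. First, the orthogonality constraint $\langle q(x_r),q(x_s)\rangle=0$ does not land in a $\mathbb{Z}_2^{\oplus k}$-trivial line but in the nontrivial irreducible $V_{\mathbf{e}_r+\mathbf{e}_s}$; this is exactly what makes the polynomial factors $(u_r+u_s)$ appear and is essential to the Section~6 computations. Second, the maps arising from cascades and non-symmetric $\mathcal{O}$ are in general only $\mathbb{Z}_2^{\oplus k}$-equivariant, not $\mathfrak{S}_k^\pm$-equivariant (the paper stresses this at the start of Section~4 and Section~6), so the upper bounds for the mixed constraints rest on Proposition \ref{prop:6.1} rather than on the full wreath-product obstruction theory of Theorem \ref{thm:3.1}.
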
	

 We note that all of the computations above are new except for the very classical result $\Delta^\perp(1;2)=2$ (see, e.g., [9]) which follows from the intermediate value theorem.  Finally, it is worth remarking that, owing to the absence of full $\mathfrak{S}_k^\pm$-equivariance for Question \ref{q:2} in general, the results from the final two sections arise only from $\mathbb{Z}_2^{\oplus k}$-equivariance and careful polynomial computations arising from the cohomology of real projective space (Proposition \ref{prop:6.1}). Thus despite the present state of Question \ref{q:1} itself, there is still considerable remaining power even of classical topological techniques in mass partition problems more generally, a theme which we return to in [20].

\section{Geometric Lower Bounds} 

	Before proceeding to topological upper bounds for Question \ref{q:2}, we first prove the expected generalization of the Ramos lower bound $k\Delta(m;k)\geq m(2^k-1)$. As opposed to the moment curve argument given in [18], however, our proof relies only on genericity of point collections and so perhaps more simply captures the intuitive precondition that the total number of equipartition and arrangement conditions cannot exceed the $kd$ degrees of freedom for $k$ hyperplanes in $\mathbb{R}^d$.

\begin{theorem} \label{thm:2.1} Let $C(\mathbf{m},\mathbf{a},\mathcal{O}; \mathbf{k})=\sum_{i=1}^k [m_i(2^{k-i+1}-1) + a_i] + |\mathcal{O}|$. Then
 \begin{equation}\label{eq:2.1} k\Delta(\mathbf{m},\mathbf{a},\mathcal{O}; k)\geq C(\mathbf{m},\mathbf{a},\mathcal{O}; \mathbf{k}). \end{equation} 
\end{theorem}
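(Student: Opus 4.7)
The plan is a parameter-counting argument. The space $\mathcal{H}^k_d$ of $k$-tuples of affine hyperplanes in $\mathbb{R}^d$ is a smooth manifold of dimension $kd$, each hyperplane being parametrized by $(S^{d-1}\times\mathbb{R})/\mathbb{Z}_2$. I will show that for a suitably generic choice of input data $(\{\mu_{i,j}\},\{A_i\})$, the constraints of Question \ref{q:2} cut out a subvariety of codimension $C(\mathbf{m},\mathbf{a},\mathcal{O};\mathbf{k})$ in $\mathcal{H}^k_d$; when $kd<C$ this subvariety is empty, yielding $\Delta(\mathbf{m},\mathbf{a},\mathcal{O};k)>d$.

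The codimension count goes constraint-by-constraint. Each orthogonality $H_r\perp H_s$ is the single polynomial equation $\langle\mathbf{a}_r,\mathbf{a}_s\rangle=0$, contributing codimension $1$. Each containment $A_i\subseteq H_i$ with $\dim A_i=a_i-1$ is equivalent to $\mathbf{a}_i$ being orthogonal to the $(a_i-1)$-dimensional direction of $A_i$ together with a single offset equation $\langle\mathbf{p},\mathbf{a}_i\rangle=b_i$ for any $\mathbf{p}\in A_i$, contributing codimension $a_i$. Each equipartition of $\mu_{i,j}$ by the $k-i+1$ hyperplanes $H_i,\ldots,H_k$ imposes that all $2^{k-i+1}$ region masses $\mu_{i,j}(\mathcal{R}_g)$ be equal, giving $2^{k-i+1}-1$ independent equations (one is implied by total mass). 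Summing over all constraints recovers exactly $C(\mathbf{m},\mathbf{a},\mathcal{O};\mathbf{k})$.

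The essential step is verifying independence for generic data, which I would establish via a parametric transversality argument. For $A_1,\ldots,A_k$ in general position, the containment and orthogonality conditions (linear and bilinear in the normals, respectively) cut out a smooth submanifold $\mathcal{H}^k_{\mathrm{cons}}\subseteq\mathcal{H}^k_d$ of the expected codimension $\sum_i a_i+|\mathcal{O}|$. On the open locus where every $\mathcal{R}_g$ has nonempty interior --- which is automatic whenever equipartition holds, since empty regions force nonzero mass defect --- each functional $\mu_{i,j}(\mathcal{R}_g)$ depends smoothly on the hyperplane parameters, and adding a small bump function to $\mu_{i,j}$ supported inside an individual region modifies that region's mass independently of the others. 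Hence the evaluation map sending $((H_1,\ldots,H_k),\{\mu_{i,j}\})$ to the vector of equipartition defects is a submersion at each of its zeros, and applying Sard's theorem to the projection of its zero set to the measure parameter gives, for generic measures, a solution set in $\mathcal{H}^k_{\mathrm{cons}}$ of expected dimension $kd-C$ --- empty when $kd<C$. The main obstacle is verifying the independence claim for $\mathcal{H}^k_{\mathrm{cons}}$ when many orthogonality and containment constraints act on the same hyperplane: this is routine but requires checking that the prescribed linear subspaces determined by the containments, together with the orthogonality directions, remain in general position for generic data.
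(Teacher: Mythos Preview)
Your transversality approach is correct in outline and genuinely different from the paper's. The paper avoids measure-theoretic transversality altogether by a discrete limiting trick: it concentrates each $\mu_{i,j}$ on a set $C_{i,j}$ of exactly $2^{k-i+1}-1$ points and observes (via a standard compactness argument) that any limiting equipartition by $H_i,\ldots,H_k$ forces every point of $C_{i,j}$ onto $\bigcup_\ell H_\ell$, since each open region can contain at most $\lfloor(2^{k-i+1}-1)/2^{k-i+1}\rfloor=0$ of them. Taking each $A_i$ to be the affine span of $a_i$ further points, all $D=\sum_i m_i(2^{k-i+1}-1)+\sum_i a_i$ points must lie on a union of $k$ hyperplanes satisfying the $|\mathcal{O}|$ orthogonality relations, and a generic choice of the $D$ points then forces $kd-|\mathcal{O}|\geq D$.

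What this buys over your argument is that the only genericity step is the algebraic statement that $D$ generic points cannot all lie on such an arrangement unless the family of arrangements has at least $D$ parameters; there is no smooth evaluation map on measures, no Sard, and no need to exhibit a finite-dimensional perturbation family of densities. Your bump-function argument for the equipartition constraints is fine, but to invoke Sard rigorously you must restrict to a finite-dimensional family, and the point you flag about $\mathcal{H}^k_{\mathrm{cons}}$ having the expected codimension is not entirely automatic when $d$ is small relative to the constraint graph (though you only need $\dim\mathcal{H}^k_{\mathrm{cons}}\leq kd-\sum_i a_i-|\mathcal{O}|$, and empty is acceptable). The paper's point-incidence reduction ultimately rests on the same dimension bound for the orthogonality-constrained family, but it isolates that as the sole residual claim and makes it as elementary as possible.
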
 

\begin{proof}  As a preliminary observation, it follows from a standard compactness argument applied to measures concentrated at points (see, e.g., [18, 22]) that $\Delta(m;k)\leq d$ implies that for any $m$ point collections $C_1$,\ldots, $C_m$ in $\mathbb{R}^d$, there must exist some limiting collection of $k$ (possibly non-distinct) hyperplanes $H_1,\ldots, H_k$ such that the interior of each $\mathcal{R}_g= H_1^{g_1}\cap \cdots \cap H_k^{g_k}$ (some of which are possibly empty) contains at most $\frac{1}{2^k}$ of the points from each collection $C_i$. In particular, if $|C_i|=2^k-1$ for each $1\leq i \leq m$, then $\mathcal{R}_g\cap C_i=\emptyset$ for all $g\in \mathbb{Z}_2^{\oplus k}$ and all $1\leq i\leq k$, and therefore each of the $m(2^k-1)$ points must lie on the union of these hyperplanes. 

	Supposing that $\Delta(\mathbf{m},\mathbf{a},\mathcal{O}; k)\leq d$, let $M=\sum_{i=1}^k m_i(2^{k-i+1}-1)$ and let $a=\sum_{i=1}^k a_i$. We will show that any $D:=M+a$ points in $\mathbb{R}^d$ must lie on the union of (at most) $k$ hyperplanes, (at least) $|\mathcal{O}|$ of which are orthogonal, so that $kd-|\mathcal{O}|\geq D$ follows by generically choosing the $D$ points. To that end, for each $1\leq i \leq m$ consider a point collection $\{C_{i,j}\}_{1\leq j \leq m_i}$ with $|C_{i,j}|=2^{k-i+1}-1$. If $\mathbf{a}\neq \mathbf{0}$, let $A_1,\ldots, A_k$ be any flats such that $A_i$ is $(a_i-1)$-dimensional and such that $A_i$ contains the points $p_{i,1},\ldots, p_{i,a_i}$ from some collection $P=\{p_{i,j}\mid 1\leq i \leq k, 1\leq j \leq a_i\}$ disjoint from $\{C_{i,j}\}_{1\leq j \leq m_i}$. Concentrating measures at the $M$ points as before, it follows from compactness that at least $|\mathcal{O}|$ of the limiting $H_1,\ldots, H_k$ must be orthogonal, that $\{p_{i,j}\}_{j=1}^{a_i} \subset H_i$ for each $i$, and that $\cup_{\ell=1}^{k-i+1} H_\ell$ contains $\{C_{i,j}\}_{1\leq j \leq m_i}$ for each $i$ as well.  \end{proof}	

\section{Previous Equivariant Constructions for $\Delta(m;k)$} 

We present a detailed review of the Configuration-Space/Test-Map Scheme as applied to the classical Gr\"unabum--Ramos problem; the modifications required to incorporate our constraints are given in Section 4. 

\subsection{Configuration Spaces} The central observation for the introduction of equivariant topology to Question \ref{q:1} is the identification of each pair of complementary half-spaces $\{H^0,H^1\}$ in $\mathbb{R}^d$ with a unique pair  $\{\pm x\}\subset S^d$ of antipodal points on the unit sphere in $\mathbb{R}^{d+1}$:
\begin{equation} \label{eq:3.1} H^0(x): = \{\mathbf{u}\in \mathbb{R}^d\mid \langle \mathbf{u}, \mathbf{a} \rangle \geq b\} \,\,\, \text{and} \,\,\, H^1(x): = \{\mathbf{u}\in \mathbb{R}^d\mid \langle \mathbf{u}, \mathbf{a} \rangle \leq b\} = H^0(-x), \end{equation} 
\noindent where $x=(\mathbf{a},b)\in S^d\subset \mathbb{R}^d\times \mathbb{R}$. These sets are genuine half-spaces when $x\neq \pm (\mathbf{0},1)$, while $H^0(\mathbf{0},1)=\mathbb{R}^d$ and $H^1(\mathbf{0},1)=\emptyset$ correspond to a ``hyperplane at infinity". 

	Given this identification, all collections of regions $\{\mathcal{R}_g\}_{g\in \mathbb{Z}_2^{\oplus k}}$ determined by any $k$-tuple of hyperplanes (some possibly at infinity and not necessarily distinct)  are parametrizable by the $k$-fold product $(S^d)^k$: \begin{equation} \label{eq:3.2} \mathcal{R}_g(x):=\cap_{i=1}^k H^{g_i}(x_i)=\cap_{i=1}^k H^0((-1)^{g_i} x_i) \end{equation} for each $g\in \mathbb{Z}_2^{\oplus k}$ and each $x=(x_1,\ldots, x_k)\in (S^d)^k$. In addition to  the standard $\mathbb{Z}_2^{\oplus k}$-action on the product of spheres corresponding to that on $\{\mathcal{R}_g\}_{g\in \mathbb{Z}_2^{\oplus k}}$ by reflections about the hyperplanes, for $k>1$ one also has the action of the symmetric group on both $(S^d)^k$ and on any $k$-tuples of hyperplanes, and therefore that of the wreath product 
\begin{equation} \label{eq:3.3} \mathfrak{S}_k^\pm=\{g\rtimes \sigma\mid g\in \mathbb{Z}_2^{\oplus k}\,\, \text{and}\,\, \sigma \in \mathfrak{S}_k\}. \end{equation} Explicitly, $\mathfrak{S}_k$ acts on $\mathbb{Z}_2^{\oplus k}$ by $\sigma\cdot g:=(g_{\sigma^{-1}(1)},\ldots, g_{\sigma^{-1}(k)})$, resulting in the standard action  
	\begin{equation} \label{eq:3.4} (g \rtimes \sigma)\cdot (x_1,\ldots, x_k)= ((-1)^{g_1} x_{\sigma^{-1}(1)},\ldots, (-1)^{g_k}x_{\sigma^{-1}(k)}) \end{equation} on $(S^d)^k$ corresponding to that of $\mathfrak{S}_k^{\pm}$ on each collection $\{\mathcal{R}_h(x)\}_{h\in \mathbb{Z}_2^{\oplus k}}$:  
 \begin{equation} \label{eq:3.5} (g\rtimes \sigma)\cdot \mathcal{R}_h(x_1,\ldots, x_k):=\mathcal{R}_g\left((-1)^{h_{\sigma^{-1}(1)}}x_{\sigma^{-1}(1)}, \ldots, (-1)^{h_{\sigma^{-1}(k)}}x_{\sigma^{-1}(k)}\right). \end{equation}   

In addition to the product scheme primarily used (see, e.g., [6, 16, 23]) one can also consider as in [5, 8] the $k$-fold join 
	\begin{equation} \label{eq:3.6} (S^d)^{\star k}=\{\lambda\,x:=\sum_{i=1}^k \lambda_ix_i\mid x\in (S^d)^k,\, 0\leq \lambda_i\leq 1, 1\leq i \leq k,\, \text{and}\, \sum_{i=1}^k\lambda_i=1\} \end{equation}  consisting of all formal convex combination of the $x_i$. Thus $(S^d)^{\star k}$ is a topological sphere of dimension $k(d+1)-1$. The action on the join is defined similarly as before, but requires the slight modification that $(g\rtimes \sigma)\cdot \sum_{i=1}^k \lambda_i x_i = \sum_{i=1}^k  \lambda_{\sigma^{-1}(i)} (-1)^{g_i} x_{\sigma^{-1}(i)}$. In particular, note that the product can be seen inside the join via the diagonal embedding  $x\mapsto \sum_{i=1}^k \frac{1}{k}\, x_i$.

\subsection{Target Spaces and Test Maps} Evaluating the difference between the measure of a region and $\frac{1}{2^k}$ of the total mass defines a $\mathfrak{S}_k^\pm$-equivariant map, a zero of which represents the desired equipartition. Namely, $\mathfrak{S}^\pm_k$-acts on the regular representation $\mathbb{R}[\mathbb{Z}_2^{\oplus k}]$ by  $(g\rtimes \sigma)\cdot \left(\sum_{h\in \mathbb{Z}_2^{\oplus k}} r_h\, h\right) = \sum_{h\in \mathbb{Z}_2^{\oplus k}} r_{g + \sigma \cdot h} h$, and one considers the $(2^k-1)$-dimensional subrepresentation \begin{equation} \label{eq:3.7} U_k:=\left \{\sum_{h\in \mathbb{Z}_2^{\oplus k}} r_h h \mid r_h\in \mathbb{R}\, \,\, \text{and} \,\, \sum_h r_h=0 \right\}, \end{equation} i.e., the orthogonal complement of the (diagonal) trivial representation inside $\mathbb{R}[\mathbb{Z}_2^{\oplus k}]$.  Given any collection $\mathcal{M}=\{\mu_1,\ldots, \mu_m\}$ of $m$ masses on $\mathbb{R}^d$, evaluating measures in the product scheme produces a continuous map 
\begin{equation} \label{eq:3.8} \phi_{\mathcal{M}}=(\phi_1,\ldots, \phi_m): (S^d)^k\rightarrow U_k^{\oplus m} \end{equation} defined by 
\begin{equation} \label{eq:3.9} \phi_i(x)=\sum_{h \in \mathbb{Z}_2^{\oplus k}} \left(\mu_i(\mathcal{R}_h(x))- \frac{1}{2^k}\mu_i(\mathbb{R}^d)\right) h\end{equation} for each $1\leq i \leq m$, and this map is $\mathfrak{S}_k^\pm$-equivariant with respect to the above actions by construction. For the join, one lets $W_k=\{(t_1,\ldots, t_k)\in\mathbb{R}^k \mid \sum_{i=1}^k t_i=0\},$ considered as a $\mathfrak{S}_k^\pm$-representation under the action $(g\rtimes \sigma) \cdot (t_1,\ldots, t_k)=(t_{\sigma^{-1}(1)},\ldots, t_{\sigma^{-1}(k)})$, and defines a $\mathfrak{S}_k^{\pm}$-equivariant map 
					\begin{equation} \label{eq:3.10} \Phi_{\mathcal{M}}: (S^d)^{\star k} \rightarrow U_k^{\oplus m} \oplus W_k \end{equation} 
\noindent by \begin{equation} \label{eq:3.11} \Phi_{\mathcal{M}}(\lambda\, x) = (\lambda_1\cdots \lambda_k) \phi_{\mathcal{M}}(x)\oplus \left(\lambda_1-\frac{1}{k},\ldots, \lambda_k-\frac{1}{k}\right). \end{equation}

 Note that $\Phi_{\mathcal{M}}$ is well-defined, and it is immediately verified that a zero of either $\Phi_{\mathcal{M}}$ or $\phi_{\mathcal{M}}$ corresponds to $k$ genuine and distinct hyperplanes, and hence that $\Delta(m;k)\leq d$ if either of these maps vanishes. Indeed, (a) the zeros  (if any) of the test-map $\Phi_{\mathcal{M}}$ are the diagonal embedding of those of $\phi_{\mathcal{M}}$, (b) if $x_i= \pm(\mathbf{0},1)$ for any $1\leq i \leq k$ then at least one of the half-spaces would be empty, so that at least one of the $\mathcal{R}_g(x)$ would have measure zero, and (c) if $x_i=\pm x_j$ for some $1\leq i \leq j<k$, then one of the corresponding regions would be empty. 
 
 Before restating the topological upper bounds on $\Delta(m;k)$ obtained by these constructions, we make a few preliminary comments. First, the action on either configuration space is not free when $k\geq 2$: the isotropy groups $\mathfrak{S}_k^\pm(x)$ are non-trivial iff $x_i=\pm x_j$ for some $1\leq i < j\leq k$, and likewise $\mathfrak{S}_k^\pm(\lambda\,x) \neq \{e\}$ iff (a) $x_i=\pm x_j$ and $\lambda_i=\lambda_j$ for some $1\leq i<j\leq k$, or (b) when $\lambda_i=0$ for some $1\leq i \leq k$. None of the points from the singular sets \begin{equation} \label{eq:3.12} (S^d)^k_1=\{x\in (S^d)^k\mid \mathfrak{S}_k^\pm(x) \neq \{e\}\} \,\, \text{or} \,\, (S^d)^{\star k}_1=\{\lambda\,x \in (S^d)^{\star k}\mid \mathfrak{S}_k^\pm(\lambda\,x) \neq \{e\}\} \end{equation} are zeros of the maps $\Phi_{\mathcal{M}}$ or $\phi_{\mathcal{M}}$, however. Second, while we shall not use it here other than in the statement of Theorem 3.1 below, it is easily seen as in [5, Proposition 2.2] that if $f: (S^d)^k \stackrel{\mathfrak{S}_k^\pm}{\rightarrow} U_k^{\oplus m}$ is \textit{any} map which is equivariant with respect to the actions described above and whose zeros are guaranteed to lie outside $(S^d)^k_1$, then the restrictions $\phi_{\mathcal{M}}|(S^d)^k_1  \stackrel{\mathfrak{S}_k^\pm}{\simeq}(S^d)^k_1$ are equivariantly and linearly homotopic, and so $F|(S^d)^{\star k}_1  \stackrel{\mathfrak{S}_k^\pm}{\simeq}\Phi_{\mathcal{M}}|(S^d)^{\star k}_1$ as well, where $F: (S^d)^{\star k} \rightarrow U_k^{\oplus m}\oplus W_k$ is defined by $F(\lambda\, x) = (\lambda_1\cdots \lambda_k) f(x)\oplus (\lambda_1-\frac{1}{k},\ldots, \lambda_k-\frac{1}{k})$.  In particular, this has the consequence that the resulting obstruction classes are  independent of the masses considered. Finally, while the product scheme may seem  to be the more natural construction, and although the zeros of $\Phi_{\mathcal{M}}$ and $\phi_{\mathcal{M}}$ are in bijective correspondence, the join scheme has the computational advantage that the resulting configuration space is a sphere and hence (by contrast with the product) is connected up to top dimension and therefore more easily suited to (relative) equivariant obstruction theory.

\begin{theorem} \label{thm:3.1} [\emph{[5, Theorem 2.3]}]     
 Let $f: (S^d)^k \stackrel{\mathfrak{S}_k^\pm}{\rightarrow} U_k^{\oplus m}$ be such that $f(x)\neq 0$ for all $x\in (S^d)^k_1$, and let $F: (S^d)^{\star k} \stackrel{\mathfrak{S}_k^\pm}{\rightarrow} U_k^{\oplus m}\oplus W_k$ be given by $F(\lambda\, x) = (\lambda_1\cdots \lambda_k) f(x)\oplus (\lambda_1-\frac{1}{k},\ldots, \lambda_k-\frac{1}{k})$. 
\\
(a) If $f:(S^d)^k \stackrel{\mathfrak{S}_k^\pm}{\rightarrow} U_k^{\oplus m}$ has a zero, then $\Delta(m;k)\leq d$.\\
(b) Let $H: (S^d)^{\star k} \stackrel{\mathfrak{S}_k^\pm}{\rightarrow} U_k^{\oplus m} \oplus W_k$, and suppose that $H|(S^d)^{\star k}_1 \stackrel{\mathfrak{S}_k^\pm}{\simeq}F|(S^d)^{\star k}_1$. If $H$ has a zero, then $\Delta(m;k)\leq d$. 
\end{theorem}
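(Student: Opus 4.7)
The plan is to verify the Configuration-Space/Test-Map correspondence, which translates the equipartition problem for any mass collection $\mathcal{M}$ into finding zeros of the canonical equivariant test maps $\phi_{\mathcal{M}}$ and $\Phi_{\mathcal{M}}$.

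For part (a), I would apply the statement with $f = \phi_{\mathcal{M}}$. The defining formula \eqref{eq:3.9} makes $\phi_{\mathcal{M}}$ equivariant by inspection, and observations (b) and (c) recalled before the theorem rule out zeros on $(S^d)^k_1$: if $x_i = \pm(\mathbf{0},1)$ then some half-space is empty, and if $x_i = \pm x_j$ for $i \neq j$ then some region is empty, in either case forcing $\mu_i(\mathcal{R}_h(x)) = 0 \neq \tfrac{1}{2^k}\mu_i(\mathbb{R}^d)$. A zero $x \notin (S^d)^k_1$ then gives $k$ distinct genuine hyperplanes, and $\phi_{\mathcal{M}}(x) = 0$ unpacks to exactly \eqref{eq:1.1} for every $\mu_i$. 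Since this applies uniformly to every $\mathcal{M}$, $\Delta(m;k) \leq d$.

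For part (b), I would first record observation (a) from the text: any zero $\lambda\, x$ of $\Phi_{\mathcal{M}}$ (or $F$) must have $\lambda_i = 1/k$ for all $i$, since the $W_k$-component vanishes, and then $\lambda_1 \cdots \lambda_k = k^{-k} \neq 0$ forces $\phi_{\mathcal{M}}(x) = 0$ (resp.\ $f(x) = 0$). Hence zeros of the join test map correspond via the diagonal embedding to zeros of the product test map, and by part (a) it suffices to show $\Phi_{\mathcal{M}}$ has a zero for every $\mathcal{M}$.

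To transfer a zero of $H$ to $\Phi_{\mathcal{M}}$, I would combine the hypothesis $H|(S^d)^{\star k}_1 \stackrel{\mathfrak{S}_k^\pm}{\simeq} F|(S^d)^{\star k}_1$ with the complementary fact (noted before the theorem) that $F|(S^d)^{\star k}_1 \stackrel{\mathfrak{S}_k^\pm}{\simeq} \Phi_{\mathcal{M}}|(S^d)^{\star k}_1$, obtained by lifting the linear equivariant homotopy between $f|(S^d)^k_1$ and $\phi_{\mathcal{M}}|(S^d)^k_1$ (both nonvanishing) through the join construction. Since $(S^d)^{\star k}$ is an equivariant sphere, relative equivariant obstruction theory applied to the pair $\bigl((S^d)^{\star k}, (S^d)^{\star k}_1\bigr)$ shows that the primary obstruction to a non-vanishing equivariant extension depends only on the equivariant homotopy class of the restriction. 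Hence $H$ having a zero forces $\Phi_{\mathcal{M}}$ to have one as well, and the first stage together with part (a) then delivers $\Delta(m;k) \leq d$. The main obstacle is precisely this obstruction-theoretic step: choosing a suitable $\mathfrak{S}_k^\pm$-CW structure on the pair and identifying the primary obstruction in the appropriate relative equivariant cohomology with coefficients in a homotopy group of $(U_k^{\oplus m} \oplus W_k) \setminus \{0\}$; everything else is direct unpacking of definitions.
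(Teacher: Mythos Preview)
The paper does not itself prove this theorem; it is quoted from [5, Theorem~2.3], and the closest thing to a proof is the discussion immediately preceding it (the observations labeled (a)--(c) on zeros of $\phi_{\mathcal{M}}$ and $\Phi_{\mathcal{M}}$, together with the remark that $\phi_{\mathcal{M}}|(S^d)^k_1$ and $f|(S^d)^k_1$ are equivariantly linearly homotopic, whence ``the resulting obstruction classes are independent of the masses considered''). Your proposal follows that discussion essentially verbatim: for (a) you verify that $\phi_{\mathcal{M}}$ is an admissible $f$ and that its zeros give equipartitions; for (b) you chain $H|(S^d)^{\star k}_1 \simeq F|(S^d)^{\star k}_1 \simeq \Phi_{\mathcal{M}}|(S^d)^{\star k}_1$ and invoke relative equivariant obstruction theory, exactly as the paper's remark indicates.

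One caveat worth flagging. Taken literally, the hypothesis in (b) that a \emph{single} fixed $H$ has a zero does not force the primary obstruction to be nonzero---the zero could be removable by an equivariant homotopy rel the singular set---so your transfer ``$H$ has a zero $\Rightarrow$ $\Phi_{\mathcal{M}}$ has a zero'' is not justified as stated. The same issue applies to your reading of (a) for an arbitrary $f$. The intended reading, consistent with how the theorem is applied in Remark~1 and in [5], is that \emph{every} such $f$ (respectively $H$) must have a zero, equivalently that the relevant obstruction class is nonzero. Under that reading your argument is correct, and in fact (b) becomes simpler than you wrote: one need only check that $\Phi_{\mathcal{M}}$ is itself an admissible $H$ and apply the hypothesis directly, with no separate obstruction-theoretic transfer step required.
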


 \noindent \textbf{Remark 1.} All presently known upper bounds for $\Delta(m;k)$ can be seen as either immediate consequences of Theorem \ref{thm:3.1}, or of Theorem \ref{thm:3.1} combined with the reduction  $\Delta(m;k+1)\leq \Delta(2m;k)$ obtained by simultaneously bisecting any given $m$ measures in $\mathbb{R}^{\Delta(2m;k)}$ by a single hyperplane. The general upper bound $U(m;k)$  -- including $\Delta(2^{q+1}-1;2)=3\cdot 2^q -1$ for all $q\geq0$  -- follows from case (a) by restricting to $\mathbb{Z}_2^{\oplus k}$-equivariance [16, Theorem 4.2], $\Delta(2^{q+2}+1;2)=3\cdot 2^q +1$ for all $q\geq 0$ also follows from (a) by using the full wreath product action [6, Theorem 5.1], and case (b) yields $\Delta(2^{q+1};2)=3\cdot 2^q$ for all $q\geq0$, $\Delta(2;3)=5$, $\Delta(4;3)=10$ [5, Theorems 1.5--1.6]. In fact, (b) also recovers $\Delta(2^{q+1}-1;2)=3\cdot 2^q -1$ and $\Delta(2^{q+2}+1;2)=3\cdot 2^q +1$ via [5, Theorem 1.4]. The remaining known value $\Delta(1;3)=3$ follows by reduction, as do the estimates $4\leq \Delta(1;4)\leq 5$ and $8\leq \Delta(2;4)\leq 10$.

\section{Constraints as Enlarged Representations} Constrained variants on the equipartition problem can be introduced simply into the above scheme  by enlarging the target spaces and test maps. When the resulting extension is a subrepresentation of $U_k^{\oplus n}$, $n>m$, the full power of Theorem \ref{thm:3.1} can be used. While this is the case for $\Delta^\perp(m;k)$, those arising from Question 2 more generally are rarely invariant under the full $\mathfrak{S}_k^\pm$-action (but are always invariant under the action of the $\mathbb{Z}_2^{\oplus k}$-subgroup), so our results in those circumstances follow instead from a Borsuk--Ulam type theorem for arbitrary $\mathbb{Z}_2^{\oplus k}$-modules (Proposition \ref{prop:6.1}). 

	In either case, we recall that the irreducible representations of $\mathbb{Z}_2^{\oplus k}$ are all 1-dimensional and are indexed by the group itself, with each $\alpha=(\alpha_1,\ldots, \alpha_k) \in \mathbb{Z}_2^{\oplus k}$ giving rise to $\rho_\alpha: \mathbb{Z}_2^{\oplus k} \rightarrow \mathbb{R}$ defined by $\rho_\alpha(g)=(-1)^{\alpha_1g_1 +\cdots+  \alpha_k g_k}$. Given $A\subseteq \mathbb{Z}_2^{\oplus k}$, the $\mathbb{Z}_2^{\oplus k}$-representation $V=\oplus_{\alpha\in A} V_\alpha$ is a $\mathfrak{S}_k^\pm$-subrepresentation of $\mathbb{R}[\mathbb{Z}_2^{\oplus k}]$ iff  is closed under the action of the symmetric group on $\mathbb{Z}_2^{\oplus k}$ described in Section 3, as is the case with  $U_k\cong \oplus_{\alpha\neq 0} V_\alpha$. With this viewpoint, we can now describe the CS/TM set-up for the various constraints of Problem 2.

\subsection{Orthogonality} Given $\mathcal{O}\subseteq \{(r,s)\mid 1\leq r<s\leq k\}$, let $\mathbf{e}_1,\ldots, \mathbf{e}_k$ denote the standard basis vectors for $\mathbb{Z}_2^{\oplus k}$ and let \begin{equation} \label{eq:4.1} A(\mathcal{O})=\{(\alpha_1,\ldots, \alpha_k)\mid \alpha_r=\alpha_s=1\,\, \text{and}\,\, \alpha_i=0\,\, \text{if}\,\, i\neq r,s\}. \end{equation} Thus
\begin{equation} \label{eq:4.2} V_{A(\mathcal{O})}=\oplus_{(r,s)\in\mathcal{O}} V_{\mathbf{e}_r+\mathbf{e}_s}. \end{equation}
Letting $q: S^d\rightarrow \mathbb{R}^d$ denote the projection of each $x\in S^d$ onto its first $d$ coordinates, evaluating the various inner products $\langle q(x_r),q(x_s)\rangle$ for each $x=(x_1,\ldots, x_k) \in (S^d)^k$ and each $(r,s)\in \mathcal{O}$ yields a $\mathbb{Z}_2^{\oplus k}$-equivariant map 
\begin{equation} \label{eq:4.3} \phi_{\mathcal{O}}: (S^d)^k \rightarrow V_{A(\mathcal{O})},\end{equation} and thus one has the desired orthogonal equipartition if $\phi_{\mathcal{M}}\oplus \phi_{\mathcal{O}}: (S^d)^k \rightarrow  U_k^{\oplus m} \oplus V_{A(\mathcal{O})}$ has a zero. This map is $\mathfrak{S}_k^\pm$-equivariant iff $A(\mathcal{O})$ is $\mathfrak{S}_k$-invariant, as is the case when full orthogonality is prescribed. One has the analogous construction for the join, with $\phi_{\mathcal{O}}$ replaced by \begin{equation} \label{eq:4.4} \Phi_{\mathcal{O}}: (S^d)^{\star k}\rightarrow V_{A(\mathcal{O})}\end{equation} defined by $\Phi_{\mathcal{O}}(\lambda\,x)=(\lambda_1\cdots \lambda_k)\phi_{\mathcal{O}}(x)$.

\subsection{Affine Containment} Prescribing that each $H_i$ contains a given $(a_i-1)$-dimensional affine subspace $A_i$ can be seen by adjoining $\oplus_{i=1}^k V_{\mathbf{e}_i}^{a_i}$ to $U_k^{\oplus m}$, with similar remarks for the join scheme. Namely, $H_i$ contains a given point $p$ iff $H_i$ equipartitions the mass $\mu_p$ defined by a unit ball centered at $p$, so $A_i=\text{aff}(p_{i,1},\ldots, p_{i,k})\subseteq H_i$ iff $H_i$ simultaneously equipartitions the $\mu_{p_{i,1}},\ldots, \mu_{p_{i,a_i}}$. Clearly, \begin{equation} \label{eq:4.5} \phi_{\mathcal{A}}:= (S^d)^k  \rightarrow \oplus_{i=1}^k V_{\mathbf{e}_i}^{a_i} \end{equation} given by $x\mapsto [\mu_{p_{i,j}}(H_i^0(x_i))-\frac{1}{2}\mu_{p_{i,j}}(\mathbb{R}^d)]$ for each $1\leq i \leq k$ and each $1\leq j \leq a_i$ is  $\mathbb{Z}_2^{\oplus k}$-equivariant in general, and $\mathfrak{S}_k^\pm$-equivariant iff $a_1=\cdots = a_k$.

\subsection{Cascades} Let $\mathcal{M}_1=\{\mu_{1,1},\ldots, \mu_{1,m_1}\},\ldots, \mathcal{M}_k=\{\mu_{k,1},\ldots, \mu_{k,m_k}\}$ be $k$ collections of masses on $\mathbb{R}^d$. Recall that a ``cascade" by hyperplanes $H_1,\ldots, H_k$ means that $H_i,\ldots, H_k$ equipartitions $\mathcal{M}_i$ for each $1\leq i \leq k$. 

	For any $1\leq i \leq k$, let $\pi_i: (S^d)^k\rightarrow (S^d)^{(k-i+1)}$ denote the projection onto the last $(k-i+1)$ coordinates. Each $\pi_i(x)$ then determines a $(k-i+1)$-tuple of hyperplanes with corresponding regions $\mathcal{R}_{(g_{k-i+1},\ldots, g_k)}(\pi_i(x))=\cap_{\ell=k-i+1}^k H^{g_\ell}(x_\ell)$ for each $(g_{k-i+1},\ldots, g_k) \in \mathbb{Z}_2^{\oplus(k-i+1)}$. Letting $U_{k,i}$ denote the orthogonal complement of the trivial representation inside $\mathbb{R}[\mathbb{Z}_2^{\oplus (k-i+1)}]$ as before, define \begin{equation} \label{eq:4.6} \phi_{\mathcal{M}_i}:= (S^d)^k \rightarrow U_{k,i}^{\oplus m_i} \end{equation} by \begin{equation} \label{eq:4.7} x\mapsto \sum_{(g_{k-i +1}, \ldots, g_k)\in \mathbb{Z}_2^{\oplus (k-i+1)}}\left(\mu_{i,j}(\mathcal{R}_{(g_{k-i+1},\ldots, g_k)}(\pi_i(x)) -\frac{1}{2^{k-i+1}}\mu_{i,j}(\mathbb{R}^d)\right)(g_{k-i+1},\ldots, g_k) \end{equation} for each $1\leq j\leq m_i$. It is easily verified that this map is $\mathfrak{S}_k^{\pm}$-equivariant, where the group acts standardly on $(S^d)^k$, and on $U_{k,i}$ by projecting $\mathfrak{S}_k^{\pm}$ onto $\mathfrak{S}_{k-i+1}^\pm$ and letting the latter act as before. The desired cascade is thus a zero of the $\mathfrak{S}_k^\pm$-equivariant map \begin{equation} \label{eq:4.8} \phi_{\mathcal{M}_\mathbf{m}}=\phi_{\mathcal{M}_1}\oplus \cdots \oplus \phi_{\mathcal{M}_k}: (S^d)^k \rightarrow U_{k,1}^{\oplus m_1}\oplus \cdots \oplus U_{k,k}^{\oplus m_k} \end{equation} 

	Note that $\oplus_{i=1}^k U_{k,1}^{\oplus m_i}$ is not a $\mathfrak{S}_k^\pm$-subrepresentation of $U_k^{\oplus (m_1+\cdots m_k)}$ under the action considered in Section 3, nor is the map $\phi_{\mathcal{M}_\textbf{m}}$ equivariant with respect to those actions unless $\mathbf{m}= m_1\mathbf{e}_1$ (in which case the cascade condition is vacuous). Again, similar remarks hold for the join scheme. 

\section{Full Orthogonality via Restriction} 

Having given the necessary changes in the CS/TM scheme for the constrained cases, the remainder of this paper presents topological upper bounds to Question \ref{q:2}. 

	We begin with $\Delta^\perp(m;k)$, in which case an observation of Florian Frick shows that the restriction trick [6, Proposition 3.3] of Blagojevi\'c, Frick, Hasse, and Ziegler for certain $\mathfrak{S}_k^\pm$-subrepresentations of $U_k^{\oplus (m+1)}$ easily produces upper bounds for full orthogonality from those for $\Delta(m+1;k)$ obtained equivariantly. To that end, let $\mathcal{B}=\{\mathbf{e}_1,\ldots, \mathbf{e}_k\}$ denote the standard basis of $\mathbb{Z}_2^{\oplus k}$, and let $\mathcal{B}^C$ denote its complement inside $\mathbb{Z}_2^{\oplus k}$. 

\begin{lemma} \label{lem:5.1} [\emph{[6, Proposition 3.3]}]   Let $f: (S^d)^k \stackrel{\mathfrak{S}_k^\pm}{\longrightarrow} U_k^{\oplus m}$ and $F: (S^d)^{\star k} \stackrel{\mathfrak{S}_k^\pm}{\longrightarrow} U_k^{\oplus m}\oplus W_k$ be as in Theorem 3.1, and suppose that $f': (S^{d-1})^k \stackrel{\mathfrak{S}_k^\pm} \rightarrow U_k^{\oplus (m-1)}\oplus_{\alpha\in \mathcal{B}^C} V_\alpha$ is without zeros on $(S^{d-1})^k_1$ and that $F': (S^{d-1})^{\star k} \stackrel{\mathfrak{S}_k^\pm} \rightarrow U_k^{\oplus (m-1)} \oplus_{\alpha\in \mathcal{B}^C} V_\alpha \oplus W_k$ is defined by $F'(\lambda\, x) = (\lambda_1\cdots \lambda_k) f'(x)\oplus (\lambda_1-\frac{1}{k},\ldots, \lambda_k-\frac{1}{k}).$\\
(a) If any such $f: (S^d)^k \stackrel{\mathfrak{S}_k^\pm}{\rightarrow} U_k^{\oplus m}$ vanishes, then so does $f': (S^{d-1})^k\stackrel{\mathfrak{S}_k^\pm} \rightarrow U_k^{\oplus (m-1)}\oplus_{\alpha\in \mathcal{B}^C} V_\alpha $.\\
(b) If any $H: (S^d)^{\star k} \stackrel{\mathfrak{S}_k^\pm}{\rightarrow} U_k^{\oplus m} \oplus W_k$ such that $H|(S^d)^{\star k}_1 \stackrel{\mathfrak{S}_k^\pm}{\simeq}F|(S^d)^{\star k}_1$ vanishes, then so does  any $H': (S^{d-1})^{\star k} \stackrel{\mathfrak{S}_k^\pm}{\rightarrow} U_k^{\oplus m} \oplus_{\alpha\in \mathcal{B}^C} V_\alpha \oplus W_k$ such that $H'|(S^{d-1})^{\star k}_1 \stackrel{\mathfrak{S}_k^\pm}{\simeq}F'|(S^{d-1})^{\star k}_1$.\\
\end{lemma}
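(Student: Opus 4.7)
The plan is to prove both parts by direct construction: given the data $f'$ in (a) or $H'$ in (b), I will build an $\mathfrak{S}_k^\pm$-equivariant map of the type to which Theorem~\ref{thm:3.1} applies, and show that any zero of that map must lie on (the image of) $(S^{d-1})^k$ inside $(S^d)^k$, thereby yielding a zero of $f'$ or $H'$ itself. The geometric observation driving the argument is that $(S^{d-1})^k$ embeds in $(S^d)^k$ as the equator in each factor, namely the locus where the last coordinate $a_i^d$ of each $x_i = (\mathbf{a}_i, b_i)$ vanishes. Crucially, the assignment $x \mapsto (a_1^d, \ldots, a_k^d)$ is $\mathfrak{S}_k^\pm$-equivariant into $\bigoplus_{i=1}^k V_{\mathbf{e}_i} = \bigoplus_{\alpha \in \mathcal{B}} V_\alpha$ (the latter being an $\mathfrak{S}_k^\pm$-subrepresentation of $U_k$ since $\mathcal{B}$ is $\mathfrak{S}_k$-invariant), and vanishes precisely on $(S^{d-1})^k$. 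This map contributes exactly the ``missing'' basis-summand needed to upgrade the target $\bigoplus_{\alpha \in \mathcal{B}^C} V_\alpha$ to a full extra copy of $U_k$.

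For part (a), I would first extend $f'$ equivariantly to $\tilde f': (S^d)^k \to U_k^{\oplus(m-1)} \oplus \bigoplus_{\alpha \in \mathcal{B}^C} V_\alpha$ using equivariant Tietze (the target is a linear $\mathfrak{S}_k^\pm$-representation and $(S^{d-1})^k$ is a closed $\mathfrak{S}_k^\pm$-subspace), and then set
\[
f \;=\; \tilde f' \;\oplus\; \bigl(x \mapsto (a_1^d,\ldots, a_k^d)\bigr) \;:\; (S^d)^k \longrightarrow U_k^{\oplus m}.
\]
The added summand is nonzero off the equator, so any zero of $f$ lies in $(S^{d-1})^k$ and there reduces to a zero of $f'$. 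Moreover, the assumption that $f'$ is nonzero on $(S^{d-1})^k_1 = (S^d)^k_1 \cap (S^{d-1})^k$, combined with the off-equator nonvanishing of the added summand, ensures $f$ is nonzero on $(S^d)^k_1$. Since by hypothesis every such $f$ vanishes, we obtain the desired zero of $f'$.

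Part (b) proceeds in parallel on the join: adjoin the summand $(\lambda\,x) \mapsto (\lambda_1 \cdots \lambda_k)(a_1^d, \ldots, a_k^d)$ to an equivariant extension of $H'$, carrying along the $W_k$-part verbatim. This produces an $H$ whose new summand has exactly the $F$-form of Theorem~\ref{thm:3.1}(b), and the given homotopy $H'|(S^{d-1})^{\star k}_1 \stackrel{\mathfrak{S}_k^\pm}{\simeq} F'|(S^{d-1})^{\star k}_1$ extends by straight-line interpolation in each linear representation summand to a homotopy $H|(S^d)^{\star k}_1 \stackrel{\mathfrak{S}_k^\pm}{\simeq} F|(S^d)^{\star k}_1$. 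The hard part will be bookkeeping rather than topology: I must verify that the equivariant extensions introduce no spurious zeros on the larger singular sets $(S^d)^k_1$ and $(S^d)^{\star k}_1$, and that the singular-set homotopy hypothesis survives the restriction in the join setting. These reduce respectively to the linearity of the target representations and to the compatibility $(S^{d-1})^k_1 = (S^d)^k_1 \cap (S^{d-1})^k$ (with the analogous identity for the join), so no deeper equivariant-topological input is required.
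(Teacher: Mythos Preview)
Your approach is essentially the paper's own: adjoin the ``last-coordinate'' map into $\bigoplus_{i=1}^k V_{\mathbf{e}_i}$ to complete the target to $U_k^{\oplus m}$, thereby forcing any zero onto the equatorial $(S^{d-1})^k$ (respectively $(S^{d-1})^{\star k}$). The only substantive difference is in the extension step for part~(b): the paper writes $S^d = S^{d-1}\star S^0$ and extends $H'$ (and $F'$) explicitly by scaling with the join parameters $t_1\cdots t_k$, which makes the passage from the given homotopy on $(S^{d-1})^{\star k}_1$ to one on $(S^d)^{\star k}_1$ automatic. Your Tietze-based extension is fine for producing the map $H$, but the phrase ``extends by straight-line interpolation in each linear representation summand'' is not quite the right mechanism for extending the \emph{homotopy} --- you are enlarging the domain $(S^{d-1})^{\star k}_1 \subset (S^d)^{\star k}_1$, not interpolating between two maps with common domain. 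This is a minor imprecision rather than a gap: either adopt the paper's explicit join extension (in which case the homotopy extends by the same scaling formula), or invoke the equivariant homotopy extension property for the CW pair $\bigl((S^d)^{\star k}_1,\,(S^{d-1})^{\star k}_1\bigr)$.
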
 

\begin{proof} As part (b) is absent from [6],  we include it here for the sake of completeness even though its proof is essentially the same as that of (a) given there. Let $Q: (S^d)^{\star k} \rightarrow \oplus _{i=1}^k V_{\mathbf{e}_i}$ be the $\mathfrak{S}_k^\pm$-map sending each $\sum_{i=1}^k\lambda_ix_i$ to $(\lambda_1q_k(x_1),\ldots, \lambda_kq_k(x_k))$, where $q_k(x_i)$ is the last coordinate of $x_i\in S^d$. Clearly, $Q^{-1}(0)=(S^{d-1})^{\star k}$. Viewing $S^d$ as the join of $S^{d-1}$ and $S^0$, let $H: (S^d)^{\star k}=(S^{d-1}\star S^0)^{\star k} \stackrel{\mathfrak{S}_k^\pm}{\rightarrow} U_k^{\oplus (m-1)}  \oplus_{\alpha\in \mathcal{B}^C} V_\alpha \oplus W_k$ be the extension of the given $H': (S^{d-1})^{\star k} \rightarrow  U_k^{\oplus (m-1)} \oplus_{\alpha\in \mathcal{B}^C} V_\alpha \oplus W_k$ defined by $ H\left(\sum_{i=1}^k\lambda_i(t_ix_i+(1-t_i)y_i)\right) = (t_1\cdots t_k)H'\left(\sum_{i=1}^k\lambda_ix_i\right)$,  where $x_i\in S^{d-1}$, $y_i\in S^0$, and $0\leq t_i\leq 1$. Letting $F$ be the analogous extension of $F'$, one has maps $(H\oplus Q), (F\oplus Q): (S^d)^{\star k} \stackrel{\mathfrak{S}_k^\pm}{\rightarrow} U_k^{\oplus m} \oplus W_k$, and it follows from $H'|(S^{d-1})^{\star k}_1 \stackrel{\mathfrak{S}_k^\pm}{\simeq}F'|(S^{d-1})^{\star k}_1$ that $(H\oplus Q)|(S^d)^{\star k}_1 \stackrel{\mathfrak{S}_k^\pm}{\simeq}(F\oplus Q)|(S^d)^{\star k}_1$ as well. However, any zero of $H\oplus Q$ map must lie in $(S^{d-1})^{\star k}$ and hence must be a zero of the original map $H'$.\end{proof}

\begin{theorem} \label{thm:5.2} If $\Delta(m;k)\leq d$ via Theorem 3.1, then $\Delta^\perp(m-1;k)\leq d-1$. Thus for $q\geq 0$ one has \\
$\bullet$ $\Delta^\perp(2^{q+1}-1;2)=3\cdot 2^q -1$\\
$\bullet$ $\Delta^\perp(2^{q+2}-2;2)=3\cdot 2^{q+1}-2$ \\
$\bullet$ $\Delta^\perp(2^{q+2}; 2)=3\cdot 2^{q+1}+1$\\
$\bullet$ $\Delta^\perp(1;3)=4$ and $8\leq \Delta^\perp(3;3)\leq 9$\\
$\bullet$ $\Delta^\perp(2^q+r;k)\leq U(2^q+r;k):=2^{q+k-1}+r$ for all $0\leq r\leq 2^q-2$
\end{theorem}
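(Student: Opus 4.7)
My plan has two stages: first, establish the core conditional implication ``$\Delta(m;k)\leq d$ via Theorem~\ref{thm:3.1} implies $\Delta^\perp(m-1;k)\leq d-1$'' by a direct application of Lemma~\ref{lem:5.1}; second, feed each known upper bound from Remark~1 into that implication and pair it with the geometric lower bound of Theorem~\ref{thm:2.1}.

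For the central implication, suppose $\Delta(m;k)\leq d$ comes from Theorem~\ref{thm:3.1}(a), and let $\mu_1,\ldots,\mu_{m-1}$ be arbitrary masses in $\mathbb{R}^{d-1}$. I would assemble the test-map
\[
f'=\phi_{\mathcal{M}}\oplus\phi_{\mathcal{O}}\oplus 0: (S^{d-1})^k \stackrel{\mathfrak{S}_k^\pm}{\longrightarrow}\ U_k^{\oplus (m-1)}\oplus\bigoplus_{\alpha\in\mathcal{B}^C}V_\alpha,
\]
where $\mathcal{O}=\{(r,s)\mid 1\leq r<s\leq k\}$ is the full orthogonality set, $\phi_{\mathcal{O}}$ lands in the subsummand $V_{A(\mathcal{O})}=\bigoplus_{r<s}V_{\mathbf{e}_r+\mathbf{e}_s}$, and the remaining components (indexed by elements of $\mathcal{B}^C$ of Hamming weight $\geq 3$) are padded with zero. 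The crucial observation is that each $\mathbf{e}_r+\mathbf{e}_s$ has weight $2$ and so does indeed belong to $\mathcal{B}^C$, while full orthogonality makes $A(\mathcal{O})$ (and its complement inside $\mathcal{B}^C$) $\mathfrak{S}_k$-invariant, so that $f'$ is genuinely $\mathfrak{S}_k^\pm$-equivariant. The hypothesis of Lemma~\ref{lem:5.1}(a), namely that $f'$ has no zeros on $(S^{d-1})^k_1$, is inherited from the $\phi_{\mathcal{M}}$-component alone, since as noted in Section~3 the singular locus is disjoint from the zero set of any mass-equipartition test-map. Lemma~\ref{lem:5.1}(a) then produces a zero of $f'$, which by construction gives $k$ pairwise-orthogonal hyperplanes in $\mathbb{R}^{d-1}$ simultaneously equipartitioning all $m-1$ masses, so $\Delta^\perp(m-1;k)\leq d-1$. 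The verbatim argument with the join $\Phi_{\mathcal{O}}$ and Lemma~\ref{lem:5.1}(b) handles the case where the original bound $\Delta(m;k)\leq d$ was obtained through Theorem~\ref{thm:3.1}(b).

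For the numerical bullets, I would substitute the relevant values from Remark~1 one at a time: the first bullet uses $\Delta(2^{q+1};2)=3\cdot 2^q$ (via~(b)); the second uses $\Delta(2^{q+2}-1;2)=3\cdot 2^{q+1}-1$ (via~(a)); the third uses the value $\Delta(2^{q+2}+1;2)$ (via~(a)); the fourth uses $\Delta(2;3)=5$ and $\Delta(4;3)=10$ (both via~(b)); and the final inequality follows from $\Delta(m+1;k)\leq U(m+1;k)=U(m;k)+1$, which holds whenever $m=2^q+r$ with $0\leq r\leq 2^q-2$. To upgrade the upper bounds to equalities in the first four bullets, I would check that Theorem~\ref{thm:2.1} matches: for instance,
\[
2\,\Delta^\perp(2^{q+1}-1;2)\geq 3(2^{q+1}-1)+\tbinom{2}{2}=3\cdot 2^{q+1}-2
\]
forces $\Delta^\perp(2^{q+1}-1;2)\geq 3\cdot 2^q-1$, and the analogous arithmetic closes the second, third, and (first half of the) fourth bullets; only $\Delta^\perp(3;3)$ retains a unit-size gap since Theorem~\ref{thm:2.1} yields merely $3\Delta^\perp(3;3)\geq 3\cdot 7+3=24$.

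The real challenge, such as it is, is conceptual rather than computational: recognising that the orthogonality representation $V_{A(\mathcal{O})}$ for \emph{full} orthogonality sits as a $\mathfrak{S}_k^\pm$-summand inside $\bigoplus_{\alpha\in\mathcal{B}^C}V_\alpha$ (this is exactly what triggers Frick's observation), and that the zero-padding preserves both equivariance and the non-vanishing condition on the singular locus. Once that identification is made, everything else is the straightforward bookkeeping of matching each entry in the theorem against a value in Remark~1 and the Ramos-type lower bound.
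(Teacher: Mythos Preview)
Your proof is correct and follows essentially the same route as the paper: both recognise that for full orthogonality $V_{A(\mathcal{O})}$ is a $\mathfrak{S}_k^\pm$-subrepresentation of $\bigoplus_{\alpha\in\mathcal{B}^C}V_\alpha$, apply Lemma~\ref{lem:5.1} to $\phi_{\mathcal{M}}\oplus\phi_{\mathcal{O}}$ (respectively $\Phi_{\mathcal{M}}\oplus\Phi_{\mathcal{O}}$), and then read off the numerical bullets from Remark~1 together with the lower bound of Theorem~\ref{thm:2.1}. Your explicit zero-padding and verification that $f'$ avoids zeros on the singular locus are details the paper leaves implicit, but the argument is the same.
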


\begin{proof} As $V_{A({\mathcal{O}})}$ is a $\mathfrak{S}_k^\pm$-subrepresentation of $V_{\mathcal{B}^C}$ when $\mathcal{O}=\{(r,s)\mid 1\leq r<s\leq k\}$, this an immediate consequence of Theorem 3.1, Remark 1 above, and Lemma 5.1 applied to (a) $\phi_{\mathcal{M}}\oplus \phi_{\mathcal{O}}: (S^d)^k\rightarrow  U_k^{\oplus (m-1)} \oplus V_{A(\mathcal{O})}$ or  (b) $\Phi_{\mathcal{M}}\oplus \Phi_{\mathcal{O}}: (S^d)^{\star k} \rightarrow  U_k^{\oplus (m-1)} \oplus W_k \oplus V_{A(\mathcal{O})}$, where $\phi_{\mathcal{O}}$ and $\Phi_{\mathcal{O}}$ are from Section 4.1. \end{proof}

\noindent \textbf{Remark 2.} Theorem \label{thm:5.2} shows in particular that the best-known general upper bound for $\Delta(m;k)$ also holds in the orthogonal cases whenever $m+1$ is not a power of 2. As with Tverberg-type problems via [4], it is a testament to the power of restriction that while $\Delta^\perp(m;k)$ has been previously considered in the literature (see, e.g., [7, 18]), the above theorem gives the first exact values other than the very classical $\Delta^\perp(1;2)=2$ which can be found in [9]. Note that $\Delta^\perp(2^{q+1}-1;2)=\Delta(2^{q+1}-1;2)$, so that the one remaining degree of freedom between $k\Delta(m;k)$ and $m(2^k-1)$ in the Ramos lower bound (1.2) has been used  (see also Corollary \ref{cor:7.1} below, where orthogonality is swapped for bisection of any further prescribed measure by one of the hyperplanes). On the other hand, there is still one remaining degree of freedom in the constrained lower bound \eqref{eq:2.1} for the other cases of $\Delta^\perp(m;2)$ in Theorem \ref{thm:5.2}, and these are inaccessible via Lemma \ref{lem:5.1}. Moreover, when $k\geq 3$ there are $(2^k-1-\binom{k}{2})$ degrees of freedom remaining in the domain of Lemma \ref{lem:5.1}, and these cannot be used for either cascades or affine containment since those require some $V_{\mathbf{e}_i}$ in the test-space. Nonetheless, in the following two sections we show that $\Delta^\perp((2^{q+2}-2,1);2)=3\cdot 2^{q+1}-2$ (Corollary \ref{cor:7.1}), and in Theorem \ref{thm:6.4} we recover $\Delta^\perp(m;k)\leq U(m;k)$ above while including cascades and affine containment constraints which remove all remaining degrees of freedom in \eqref{eq:2.1}. 

\section{Optimizing the Topological Upper Bound $U(m;k)$} 

\subsection{Cohomological Preliminaries}

We now turn to upper bounds on $\Delta(\mathbf{m};\mathbf{a};\mathcal{O})$ more generally. Considering the product scheme, the resulting test-maps are
\begin{equation} \label{eq:6.1} \phi_{\mathcal{M_{\mathbf{m}}}}\oplus \phi_{\mathcal{A}}\oplus \phi_{\mathcal{O}}: (S^d)^k \rightarrow \oplus_{i=1}^k U_{k,i}^{\oplus m_i}  \oplus_{i=1}^{a_i} V_{\mathbf{e}_i}^{a_i} \oplus V_{A(\mathcal{O})}. \end{equation} 
As we saw in Section 4, the resulting target space is not a $\mathfrak{S}_k^\pm$-subrepresentation of $U_k^{\oplus n}$ (nor of $U_k^{\oplus n}\oplus W_k$ in the join construction) unless $\mathbf{m}=m\mathbf{e}_1$, $\mathbf{a}=a(\mathbf{e}_1+\cdots \mathbf{e}_k)$, and $A(\mathcal{O})$ is a symmetric subset of $\mathbb{Z}_2^{\oplus k}$. Thus the full wreath product action is not well-suited to address Question \ref{q:2} except in special circumstances, and therefore the full strength of either parts (a) or (b) of Theorem \ref{thm:3.1} are unavailable. All of these maps are certainly equivariant when restricted to the $\mathbb{Z}_2^{\oplus k}$-subgroup, however, so that results to Question \ref{q:2} in dimension $U(m;k)$ for which equality holds in \eqref{eq:2.1} will be obtained from careful polynomial calculations using the following cohomological condition. 

\begin{proposition} \label{prop:6.1} Let $M=(\alpha_{i,j})_{1\leq i\leq kd,\,1\leq j\leq k}$ be a $(kd\times k)$-matrix with $\mathbb{Z}_2$-coefficients and let $h(u_1,\ldots, u_k)=\Pi_{i=1}^{kd} (a_{i,1}u_1+\cdots +a_{i,k}u_k)\in \mathbb{Z}_2[u_1,\ldots, u_k]/(u_1^{d+1},\ldots, u_k^{d+1})$. If $h(u_1,\ldots, u_k) = u_1^d\cdots u_k^d$, then any $\mathbb{Z}_2^{\oplus k}$-equivariant map $f: (S^d)^k \rightarrow \oplus_{i=1}^{kd} V_{(\alpha_{i,1},\ldots, \alpha_{i,k})}$ has a zero. 
\end{proposition}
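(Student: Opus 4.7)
The plan is to proceed by contradiction via classical characteristic-class obstruction theory, exploiting the fact that $\mathbb{Z}_2^{\oplus k}$ acts freely on $(S^d)^k$ by coordinatewise antipode. The freeness is immediate: a nontrivial element $g = (g_1, \ldots, g_k)$ has some $g_i = 1$, and the antipodal action on the $i$-th factor has no fixed points. Thus the quotient $(S^d)^k / \mathbb{Z}_2^{\oplus k}$ is $(\mathbb{RP}^d)^k$, with mod-2 cohomology
\begin{equation*}
H^*((\mathbb{RP}^d)^k; \mathbb{Z}_2) = \mathbb{Z}_2[u_1, \ldots, u_k]/(u_1^{d+1}, \ldots, u_k^{d+1}),
\end{equation*}
where each $u_i$ is pulled back from the generator of $H^1(\mathbb{RP}^d;\mathbb{Z}_2)$ and has degree one. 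The top class $u_1^d \cdots u_k^d$ is the fundamental class modulo 2.

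Next, I would identify line bundles from characters. For any $\alpha = (\alpha_1, \ldots, \alpha_k) \in \mathbb{Z}_2^{\oplus k}$, the one-dimensional representation $V_\alpha$ yields an associated real line bundle $L_\alpha \to (\mathbb{RP}^d)^k$ under the Borel construction / free quotient. Since $V_{\mathbf{e}_i}$ pulled back along the $i$-th projection corresponds to the tautological bundle $\gamma_i$ on $\mathbb{RP}^d$ with $w_1(\gamma_i)=u_i$, and since $V_\alpha \cong V_{\mathbf{e}_1}^{\otimes \alpha_1}\otimes \cdots \otimes V_{\mathbf{e}_k}^{\otimes \alpha_k}$, the Whitney sum/tensor rule for Stiefel–Whitney classes gives
\begin{equation*}
w_1(L_\alpha) = \alpha_1 u_1 + \cdots + \alpha_k u_k.
\end{equation*}

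Now suppose, for contradiction, that the given equivariant map $f: (S^d)^k \to V := \bigoplus_{i=1}^{kd} V_{(\alpha_{i,1},\ldots,\alpha_{i,k})}$ is nowhere zero. Then $f$ descends to a nowhere-vanishing section of the associated real vector bundle
\begin{equation*}
E = \bigoplus_{i=1}^{kd} L_{(\alpha_{i,1},\ldots, \alpha_{i,k})} \longrightarrow (\mathbb{RP}^d)^k.
\end{equation*}
By the Whitney product formula, its top Stiefel–Whitney class is
\begin{equation*}
w_{kd}(E) = \prod_{i=1}^{kd} \bigl(\alpha_{i,1}u_1 + \cdots + \alpha_{i,k}u_k\bigr) = h(u_1, \ldots, u_k) \in H^{kd}((\mathbb{RP}^d)^k; \mathbb{Z}_2).
\end{equation*}
The hypothesis $h(u_1,\ldots,u_k)=u_1^d\cdots u_k^d$ therefore gives $w_{kd}(E) \neq 0$. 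But $E$ has rank $kd$ equal to the dimension of the base, so standard obstruction theory (the mod-2 Euler class obstructs nowhere-zero sections) forces $E$ to have no nonvanishing section — a contradiction, completing the proof.

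The main technical point — and the step most prone to sign/indexing slips — is the identification $w_1(L_\alpha) = \sum_j \alpha_j u_j$; everything else is then a mechanical invocation of the Whitney formula and the nonvanishing of the top class of $(\mathbb{RP}^d)^k$. I expect no serious obstacle beyond verifying this identification carefully from the Borel-construction description of $L_\alpha$.
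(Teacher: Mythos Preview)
Your proof is correct and follows essentially the same approach as the paper's own argument: both pass to the free quotient $(\mathbb{RP}^d)^k$, form the associated vector bundle $E=\bigoplus_i L_{(\alpha_{i,1},\ldots,\alpha_{i,k})}$, identify each $w_1(L_\alpha)=\sum_j \alpha_j u_j$ via the tensor decomposition into pullbacks of the tautological line bundle, and derive a contradiction from $w_{kd}(E)=h(u_1,\ldots,u_k)=u_1^d\cdots u_k^d\neq 0$. The only cosmetic difference is that the paper phrases the obstruction as ``a nonvanishing section forces $w_{kd}=0$'' up front, whereas you invoke it at the end; the logical content is identical.
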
 

\begin{proof} While the proof below has an equivalent formulation in terms of the ideal-valued index theory of [11] commonly used in topological combinatorics, we shall use Steifel-Whitney classes instead in order to emphasize the remaining power of fundamental results in algebraic topology to mass partitions problems, here the $\mathbb{Z}_2$-cohomology of real projective space. For an introduction to the theory of vector bundles and characteristic classes as used below, see, e.g., the standard references [14, 17]. 

	Let $V=\oplus_{i=1}^{kd}V_{(\alpha_{i,1},\ldots, \alpha_{i,k})}$. If $f$ were non-vanishing, the section $x\mapsto (x,f(x))$ of the trivial bundle $(S^d)^k\times V$ would induce a non-vanishing section of the $kd$-dimensional real vector bundle $\xi: V\hookrightarrow (S^d)^k\times_{\mathbb{Z}_2^{\oplus k}} V\rightarrow (\mathbb{R}P^d)^k$ obtained by quotienting via the diagonal action. As such, the top Stiefel-Whitney class $w_{kd}(\xi)\in H^{kd}((\mathbb{R}P^d)^k;\mathbb{Z}_2)$ would be zero. It is an elementary fact that $H^*(\mathbb{R}P^d;\mathbb{Z}_2)=\mathbb{Z}_2[u]/(u^{d+1})$, where $u:=w_1(\gamma)$ is the first Stiefel-Whitney class of the canonical line bundle $\gamma:\mathbb{R}\hookrightarrow S^d\times_{\mathbb{Z}_2}\mathbb{R}\rightarrow \mathbb{R}P^d$, so that by the K\"unneth formula $H^*((\mathbb{R}P^d)^k;\mathbb{Z}_2)=\mathbb{Z}[u_1,\ldots, u_k]/(u_1^{d+1},\ldots, u_k^{d+1})$, where each $u_i:=\pi_i^*(u)=w_1(\gamma_i)$ is the first Stiefel-Whitney class of the pull-back bundle $\gamma_i:=\pi_i^*(\gamma)$ under the $i$-th factor projection $\pi_i: (\mathbb{R}P^d)^k\rightarrow \mathbb{R}P^d$. Thus $H^{kd}((\mathbb{R}P^d)^k;\mathbb{Z}_2)\cong\mathbb{Z}_2$ is generated by $u_1^d\cdots u_k^d$. As $\xi=\oplus_{i=1}^{kd}\xi_{(\alpha_{i,1},\ldots, \alpha_{i,k})}$ is the direct sum of the $\xi_{(\alpha_{i,1},\ldots, \alpha_{i,k})}: V_\alpha \hookrightarrow (S^d)^k \times _{\mathbb{Z}_2^{\oplus k}} V_{\alpha_i} \rightarrow (\mathbb{R}P^d)^k$, it follows from the Whitney Sum formula that $w_{kd}(\xi) = \Pi_{i=1}^{kd} w_1(\xi_{(\alpha_{i,1},\ldots, \alpha_{i,k})})$. Finally, each $\xi_{(\alpha_{i,1},\ldots, \alpha_{i,k})}=\gamma_1^{\alpha_{i,1}}\otimes \cdots \otimes \gamma_k^{\alpha_{i,k}}$ is the tensor product of the $\gamma_j$, so $w_1(\xi_{(\alpha_{i,1},\ldots, \alpha_{i,k})})=\alpha_{i,1}u_1+\cdots +\alpha_{i,k}u_k$ because $w_1(\chi_1\otimes \chi_2)=w_1(\chi_1)+w_1(\chi_2)$ for any two real line bundles.  Thus $w_{kd}(\xi)$ is precisely the polynomial $h(u_1,\ldots, u_k)$, hence $w_{kd}(\xi)=x_1^d\cdots x^d\neq 0$ and consequently $f: (S^d)^k \rightarrow V$ must have a zero. 
\end{proof}

We now give explicit formulae for the polynomials corresponding to the representations arising from conditions (i)---(iii) of Question \ref{q:2}.

\subsubsection{Orthogonality} Any $\mathcal{O}\subseteq \{(r,s)\mid 1\leq r<s\leq k\}$ gives rise to the $\mathbb{Z}_2^{\oplus k}$-representation $V_{A(\mathcal{O})}=\oplus_{(r,s)\in\mathcal{O}} V_{\mathbf{e}_r+\mathbf{e}_s}$, with resulting  polynomial \begin{equation} \label{eq:6.2} P_k(\mathcal{O}):=\Pi_{(r,s)\in \mathcal{O}}(u_r+u_s). \end{equation}  In particular, if $\mathcal{O}_j=\{(r,s)\mid j\leq r<s\leq k\}$, $1\leq j \leq k-1$, then $P_k(\mathcal{O}_j)$ is the Vandermonde determinant (see, e.g., [19]), and therefore \begin{equation} \label{eq:6.3} P_k(\mathcal{O}_j)=\sum_{\sigma \in \mathfrak{S}_{k-j+1}} u^{k-j}_{\sigma(j)}u^{k-j-1}_{\sigma(j+1)}\cdots u^0_{\sigma(k)}. \end{equation}

\subsubsection{Affine Containment} As the resulting representation is $\oplus_{i=1}^k V_{\mathbf{e}_i}^{\oplus a_i}$, the corresponding polynomial  is \begin{equation} \label{eq:6.4} P_k(\mathbf{a}):=u_1^{a_1}\cdots u_k^{a_k}. \end{equation}

\subsubsection{Cascades} As observed in [16, Theorem 4.1] for the proof of $\Delta(m;k)\leq U(m;k)$, $\Pi_{(\alpha_1,\ldots, \alpha_k)\neq 0}(x_{\alpha_1}+\cdots +x_{\alpha_k})$ arising from the $\mathbb{Z}_2^{\oplus k}$-representation $U_k$ is Dickson and can be expressed explicitly  (see, e.g., [21]) as $\sum_{\sigma \in \mathfrak{S}_k} u_{\sigma(1)}^{2^{k-1}}u_{\sigma(2)}^{2^{k-2}}\cdots u_{\sigma(k)}^1$. Thus each $U_{k,i}$ gives rise to the polynomial  \begin{equation} \label{eq:6.5}  P_{k,i}: =\sum_{\sigma\in \mathfrak{S}_{k-i+1}} u_{\sigma(i)}^{2^{k-i}}u_{\sigma(2)}^{2^{k-i-1}}\cdots u^1_{\sigma(k)}, \end{equation} and the polynomial corresponding to any cascading equipartition $\mathcal{M}_{\textbf{m}}$ is therefore \begin{equation} \label{eq:6.6} P_k(\mathbf{m}):=P_{k,1}^{m_1}\cdots P_k^{m_k}. \end{equation}

In what follows, we rewrite the upper bound $\Delta(m;k)=U(m;k)$ for $m=2^{q+1}-t$ by \begin{equation} \label{eq:6.7} U(2^{q+1}-t;k)=2^q\cdot [2^{k-1}+1]-t  \end{equation} \noindent for all $q\geq 0$ and $1\leq t\leq 2^q$.

\subsection{Cascading Equipartitions and Affine Containment} 

First, we consider Question \ref{q:2} in the absence of any orthogonality. We let $\Delta(\mathbf{m},\mathbf{a};k):=\Delta(\mathbf{m},\mathbf{a},\emptyset;k)$, and in particular denote the ``pure cascade" $\Delta(\mathbf{m},\mathbf{0};k)$ by $\Delta(\mathbf{m};k)$.

\begin{theorem} \label{thm:6.2} Let $1\leq t\leq 2^q$. If $\mathbf{a}=(a_1,\ldots, a_k)$ satisfies (i) $a_1\leq a_2\leq \cdots \leq a_k$,  (ii) $a_2\leq 2a_1+t$, and (iii) $0\leq a_{k-1}\leq 2^q-t$, then  \begin{equation} \label{eq:6.8} \Delta(\mathbf{m},\mathbf{a};k)=2^q \cdot [ 2^{k-1} + 1] -t \,\, \text{and}\,\, k\Delta(\mathbf{m},\mathbf{a};k)=C(\mathbf{m},\mathbf{a};k), \end{equation}
\noindent where $\mathbf{m}=(m_1,\ldots, m_k)$ is given by 
\begin{equation} \label{eq:6.9} m_1= 2^{q+1}-t-a_1\,\,\, and \,\,\, m_i=2^q\cdot [2^{i-2}-1]+t+2a_{i-1}-a_i  \,\,\, for\,\, all\,\, 2\leq i \leq k. \end{equation} 
\noindent Thus $\Delta(\mathbf{m},\mathbf{a};k)=U(m_1;k)$ whenever $a_1=0$, and in particular 	
		\begin{equation} \label{eq:6.10} \Delta((2^{q+1}-t,t,2^q+t, 3\cdot 2^q+t, \ldots, 2^q\cdot [2^{k-2}-1]+t);k)=U(2^{q+1}-t;k). \end{equation}  
\end{theorem}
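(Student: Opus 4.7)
The approach is to establish matching bounds $\Delta(\mathbf{m},\mathbf{a};k) = d$ with $d := 2^q[2^{k-1}+1] - t$ via a combination of Theorem \ref{thm:2.1} and Proposition \ref{prop:6.1}.

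For the lower bound, I would substitute the formulas \eqref{eq:6.9} directly into $C(\mathbf{m},\mathbf{a},\emptyset;k) = \sum_i m_i(2^{k-i+1}-1) + \sum_i a_i$ to verify $C = kd$. Grouping by $a$-dependent and $a$-independent pieces, each $a_i$ appears with contributions from $m_i$ (coefficient $-(2^{k-i+1}-1)$), $m_{i-1}$ (coefficient $+2(2^{k-i+2}-1)$ when $i \ge 2$), and the standalone $\sum a_i$ (coefficient $+1$), which combine to zero for every $i$; the $a$-independent remainder then simplifies by the geometric-series identities $\sum_{j=1}^{k-1}(2^j-1) = 2^k-k-1$ and $\sum_{i=2}^k(2^{i-2}-1)(2^{k-i+1}-1) = (k-2)2^{k-1}-2^k+k+2$ to $k\cdot 2^q(2^{k-1}+1) - tk = kd$. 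Theorem \ref{thm:2.1} then gives $\Delta \ge d$ together with the tightness $k\Delta = C$.

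For the upper bound, I would apply Proposition \ref{prop:6.1} to the $\mathbb{Z}_2^{\oplus k}$-equivariant test map $\phi_{\mathcal{M}_\mathbf{m}} \oplus \phi_{\mathcal{A}}$ from \eqref{eq:6.1} (with $\mathcal{O}=\emptyset$), whose associated polynomial is $h := \prod_i P_{k,i}^{m_i} \cdot \prod_i u_i^{a_i}$. By the lower-bound step, $\deg h = kd$ exactly, so in the quotient $\mathbb{Z}_2[u_1,\ldots,u_k]/(u_1^{d+1},\ldots,u_k^{d+1})$ the only monomial of degree $kd$ with every exponent $\le d$ is $u_1^d \cdots u_k^d$; the task therefore reduces to showing $[u_1^d \cdots u_k^d]\, h \equiv 1 \pmod 2$, which I would do by induction on $k$ (base $k=1$ being the trivial identity $u_1^{m_1+a_1}=u_1^d$). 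Using the iterated Dickson factorization $P_{k,i} = u_k \prod_{j=i}^{k-1} Q_k^{(j)}$, where $Q_k^{(j)} := \prod_{\beta \in \mathbb{F}_2^{k-j}}(u_j + \beta \cdot(u_{j+1},\ldots,u_k)) = \sum_{r=0}^{k-j} d_{k-j,r}(u_{j+1},\ldots,u_k)\, u_j^{2^r}$ is the Dickson polynomial in $u_j$ over $\mathbb{F}_2[u_{j+1},\ldots,u_k]$, one rewrites
\[ h \;=\; u_k^{a_k + \sum_i m_i} \cdot \prod_{j=1}^{k-1} u_j^{a_j}\,\bigl(Q_k^{(j)}\bigr)^{M_j}, \qquad M_j := m_1 + \cdots + m_j. \]
Writing $m_1 = 2^q + s$ with $s := 2^q - t - a_1 \in [0, 2^q)$ (by conditions (i) and (iii)), the multinomial expansion of $(Q_k^{(1)})^{m_1}$ contributes to $[u_1^{d-a_1}]$ only via compositions $(c_0,\ldots,c_{k-1})$ with $\sum c_r = m_1$ and $\sum c_r 2^r = d - a_1 = 2^{q+k-1} + s$. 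Lucas's theorem combined with $c_r \le m_1 < 2^{q+1}$ forces the $2^q$-bit of $m_1$ onto $c_{k-1}$, and then the trivial inequality $\sum_{r<k-1} c_r 2^r \ge \sum_{r<k-1} c_r$ rules out any further bit-splitting of $m_1$ into $c_{k-1}$, pinning down the unique odd composition $(s, 0, \ldots, 0, 2^q)$ and yielding the residual $d_{k-1,0}^s \cdot d_{k-1,k-1}^{2^q} = P_{k,2}^s$ in $u_2,\ldots,u_k$. Combined with the remaining factors, this residual is exactly the $(k-1)$-variable polynomial for the shifted data $(\mathbf{m}'',\mathbf{a}') := (m_1+m_2+s, m_3,\ldots,m_k;\, a_2,\ldots,a_k)$ at parameters $(q',t') = (q+1, 2^q+t)$, for which $d' = 2^{q+1}(2^{k-2}+1) - (2^q+t) = d$ and conditions (i)--(iii) translate directly ((iii') $a'_{k-2} = a_{k-1} \le 2^q - t = 2^{q'}-t'$); the inductive hypothesis then closes the argument.

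\textbf{Main obstacle.} The technical heart of the proof is the multinomial uniqueness step, which depends on constraint (iii) placing $m_1$ in the dyadic window $[2^q, 2^{q+1})$, so that Lucas's theorem isolates a single odd-coefficient branch. A secondary subtlety is verifying that the reduced parameters $(q',t',\mathbf{m}'',\mathbf{a}')$ indeed satisfy the theorem's hypotheses at $k-1$; the precise choice $t' = 2^q+t$ and the shift $m_1' = m_1 + m_2 + s$ in $\mathbf{m}''$ are what force $d' = d$ and the propagation of (i)--(iii), and this consistency is exactly why the intricate-looking formulas \eqref{eq:6.9} are rigid.
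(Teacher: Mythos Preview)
Your overall framework is the same as the paper's: the lower bound is Theorem~\ref{thm:2.1} after verifying $C(\mathbf m,\mathbf a;k)=kd$ (a routine telescoping check), and the upper bound is Proposition~\ref{prop:6.1} applied to $h=\prod_i P_{k,i}^{m_i}\prod_i u_i^{a_i}$. Where you diverge is in how you show $h\equiv u_1^d\cdots u_k^d$. The paper works directly in the quotient ring and inducts on the \emph{stage}~$i$: writing $P_{k,i+1}^{2^{q+i}+2^q-t-a_{i+1}}=P_{k,i+1}^{2^{q+i}}\cdot P_{k,i+1}^{2^q-t-a_{i+1}}$ and using Frobenius to expand the first factor as $\sum_\sigma u_{\sigma(i+1)}^{2^{q+k-1}}\cdots u_{\sigma(k)}^{2^{q+i}}$, one checks that every $\sigma$ with $\sigma(i+1)\neq i+1$ forces $u_{\sigma(i+1)}$-exponent $>d$ and hence dies, leaving $u_{i+1}^d$ times an expression of identical shape. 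You instead induct on $k$ via the factorization $P_{k,i}=u_k\prod_j Q_k^{(j)}$ and a Lucas multinomial analysis; the reduction $(q,t,\mathbf a)\mapsto(q+1,\,2^q+t,\,(a_2,\ldots,a_k))$ with $d'=d$ is correctly set up and is an elegant repackaging once the coefficient extraction goes through.

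The gap is in your uniqueness step. What the inequality $\sum_{r<k-1}c_r2^r\ge\sum_{r<k-1}c_r$ actually gives is $c_{k-1}\le 2^q$; combined with bit-disjointness this means either $c_{k-1}=2^q$ (and then the rest is forced, as you say) \emph{or} $c_{k-1}$ is a sub-sum of the bits of $s$. You have not ruled out the second alternative: the assertion that ``Lucas's theorem combined with $c_r<2^{q+1}$ forces the $2^q$-bit onto $c_{k-1}$'' is precisely the point at issue, and nothing you have written establishes it. For instance at $k=3$ the constraint becomes $c_1+3c_2=3\cdot 2^q$ with $c_1,c_2$ bit-disjoint in $m_1$; whenever $s\ge 2^{q-1}$ there are pairs with $c_2<2^q$ that satisfy all of your stated bounds, and only a further bit-level argument eliminates them. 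Uniqueness does appear to hold, but proving it requires real work that is absent from your outline. The paper's quotient-ring approach sidesteps the issue entirely, since terms with any exponent exceeding $d$ simply vanish and no enumeration of odd multinomial branches is needed.
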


 It is worth emphasizing that this theorem strengthens $\Delta(m_1;k)\leq U(m_1;k)$ for all $m_1$ and $k$ whenever $a_1=0$. Note also that $m_2\geq 0$ and $m_i\geq 1$ for all $i\neq 2$, with $m_2\geq 1$ provided $a_2< t+2a_1$.\\
\\
\textbf{Remark 3.} Considering affine containment, it follows in particular that (a) one may choose the (non-empty) affine subspaces $A_i$ to be linear, so that the $H_i$ are linear are as well, and moreover that (b) the condition $a_1\leq a_2\leq \cdots \leq a_k$ allows for filtrations $A_1\subseteq \cdots \subseteq A_k$, so that $(A_i\cap \cdots \cap A_k)\subseteq (H_i\cap \cdots \cap H_k)$ for all $1\leq i \leq k$.

\begin{proof} Let $d=2^q\cdot [2^{k-1}+1]-t$. By the discussion above, the relevant polynomial is \begin{equation} \label{eq:6.11} P:=P(u_1,\ldots, u_k)= P_{k,1}^{m_1}\cdot u_1^{a_1} \cdot P_{k,2}^{m_2}\cdot u_2^{a_2}\cdots P_{k,k}^{m_k}\cdot u_k^{a_k}.\end{equation} 
\noindent Defining  \begin{equation} \label{eq:6.12} h_i:=P_{k,i+1}^{m_{i+1}} \cdot u_{i+1}^{a_{i+1}} \cdots P_{k,k}^{m_k}\cdot u_k^{a_k} \end{equation} 

\noindent for all $0\leq i \leq k-1$ and $h_k:=1$, a simple induction argument will show that \begin{equation} \label{eq:6.13} P= u_1^d\cdots u_i^d \cdot  P_{k,i+1}^{2^{q+i}+2^q-t-a_{i+1}}\cdot u_{i+1}^{a_{i+1}} \cdot h_{i+1} \end{equation}  for all $0\leq i \leq k-1$, where it is to be understood that $u_1^d\cdot u_0^d=1$. Letting $i=k-1$ in \eqref{eq:6.13} then immediately yields $P=u_1^d\cdots u_k^d$. Clearly, \eqref{eq:6.13} holds when $i=0$. Assuming it true for $0\leq i\leq k-2$, consider $P_{k,i+1}^{2^{q+i} +2^q-t-a_{i+1}}=P_{k,i+1}^{2^{q+i}}\cdot  P_{k,i+1}^{2^q-t-a_{i+1}}$ and observe that  $P_{k,i+1}^{2^{q+i}}=\sum_{\sigma\in \mathfrak{S}_{k-i}} u_{\sigma(i+1)}^{2^{q+k-1}}\cdots u_{\sigma(k)}^{2^{q+i}}$. Letting $e(u_{\sigma(i+1)})$ denote  the exponent of $u_{\sigma(i+1)}$ in \eqref{eq:6.13} for each $\sigma\in \mathfrak{S}_{k-i}$, one has $e(u_{\sigma(i+1)})\geq 2^q\cdot 2^{k-1} + 2^q-t-a_{i+1}+a_{i+1}=d$ if $\sigma(i+1)=i+1$. On the other hand, $\sigma(i+1)\geq i+2$ forces $e(u_{\sigma(i+1)})\geq d-a_{i+1}+m_{i+2}+a_{i+2}\geq d+t+a_{i+1}>d$. Thus $P=u_1^d\cdots u_{i+1}^d \cdot P_{k,i+2}^{2^{q+i}}\cdot \left(\sum_{\tau\in \mathfrak{S}_{k-i-1}}u_{\tau(i+2)}^{2^{k-i-1}}\cdots u_{\tau(k)}^2\right)^{2^q-t-a_{i+1}}\cdot P_{k,i+2}^{m_{i+2}}\cdot u_{i+2}^{a_{i+2}}\cdot h_{i+2}$ and hence 
$P=u_1^d\cdots u_{i+1}^d \cdot P_{k,i+2}^{2^{q+i}}\cdot P_{k,i+2}^{2^{q+1}-2t-2a_{i+1}}\cdot P_{k,i+2}^{m_{i+2}}\cdot u_{i+2}^{a_{i+2}}\cdot h_{i+2}.$ As $m_{i+2}=2^{q+i} + t+ 2a_{i+1}-a_{i+2}$, the inductive step is complete. 
 \end{proof}
 
 As a special case of Theorem 6.2, we observe that letting $t=2^q$ in \eqref{eq:6.10} -- in which case $U(m;k)$ gives the weakest upper bound for the Gr\"unbaum--Ramos -- gives an immediate strengthening of the Ham Sandwich Theorem in dimensions a power of two. 

\begin{corollary} \label{cor:6.3} Let $q\geq 0$. Given any $2^{q+k-1}$ masses $\mu_1,\ldots, \mu_{2^{q+k-1}}$ on $\mathbb{R}^{2^{q+k-1}}$, there exists $k$ hyperplanes $H_1,\ldots, H_k$ such that
$H_i,\ldots, H_k$ equipartitions $\mu_1,\ldots, \mu_{2^{q+i-1}}$ for all $1\leq i \leq k$. \end{corollary}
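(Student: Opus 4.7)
The plan is to invoke Theorem \ref{thm:6.2} at the extreme value $t = 2^q$ with no affine-containment constraints ($\mathbf{a} = \mathbf{0}$), and then repackage the resulting cascade into the cumulative form asserted in the corollary.

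First I would verify the numerology. With $t=2^q$, the dimension in Theorem \ref{thm:6.2} collapses to $U(2^q;k) = 2^q\cdot[2^{k-1}+1] - 2^q = 2^{q+k-1}$, matching the ambient dimension. The mass counts from \eqref{eq:6.10} reduce to $m_1 = 2^q$ and $m_i = 2^q[2^{i-2}-1] + 2^q = 2^{q+i-2}$ for $2\le i\le k$. A short geometric-series calculation then gives the key identity
\begin{equation*}
\sum_{j=1}^{i} m_j \;=\; 2^{q+i-1} \qquad (1 \le i \le k),
\end{equation*}
so in particular $\sum_{j=1}^k m_j = 2^{q+k-1}$ matches the total number of masses given. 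Accordingly, I would partition the given family $\mu_1,\ldots,\mu_{2^{q+k-1}}$ into blocks $\mathcal{M}_i$ of consecutive indices of size $m_i$, arranged so that $\mathcal{M}_1 \cup \cdots \cup \mathcal{M}_i = \{\mu_1,\ldots,\mu_{2^{q+i-1}}\}$ for every $i$.

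Then I would apply Theorem \ref{thm:6.2} with $\mathbf{m} = (m_1,\ldots,m_k)$ to produce hyperplanes $H_1,\ldots,H_k$ in $\mathbb{R}^{2^{q+k-1}}$ such that $H_i,\ldots,H_k$ equipartitions $\mathcal{M}_i$ for each $i$, and close with the elementary monotonicity observation that an equipartition of a measure by $H_i,\ldots,H_k$ automatically induces an equipartition by $H_{i+1},\ldots,H_k$, since every region of the coarser arrangement is the disjoint union of two regions of the finer one, each already carrying an equal share of the mass. Iterating this observation, $H_i,\ldots,H_k$ equipartitions every $\mathcal{M}_j$ with $j\le i$, and hence their union $\{\mu_1,\ldots,\mu_{2^{q+i-1}}\}$, yielding the claim. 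There is no genuine obstacle here beyond the two routine arithmetic checks above; the mathematical content is supplied entirely by Theorem \ref{thm:6.2}, and the corollary is essentially a repackaging of its $t=2^q$ case.
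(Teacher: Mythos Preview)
Your proposal is correct and follows exactly the paper's approach: the corollary is stated right after Theorem~\ref{thm:6.2} as the special case $t=2^q$ of \eqref{eq:6.10}, and the paper treats it as immediate. You are simply more explicit than the paper about the one step it leaves tacit, namely the monotonicity observation that an equipartition by $H_i,\ldots,H_k$ descends to one by $H_{i+1},\ldots,H_k$, which converts the incremental cascade of Theorem~\ref{thm:6.2} into the cumulative statement of the corollary; your arithmetic checks are also correct.
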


\subsection{Inclusion of Orthogonality Constraints} 

Whenever $t\geq 2$ and $q\geq 1$, the above argument can be modified so as to include full orthogonality, so that when $a_1=0$ one has a strengthening of $\Delta^\perp(m;k)\leq U(m;k)$ from Theorem \ref{thm:5.2} for all $m+1$ which is not a power of 2. While we prove this only when $2^q\geq a_{k-1}+t+k-3$, it is clear that adjustments in $\mathbf{a}$ and $\mathbf{m}$ below allow for full orthogonality plus constraints when $2^q=a_{j-1}+t+j-3$ and $2\leq j \leq k-1$. 

\begin{theorem} \label{thm:6.4} Let $t\geq 2$. If (i) $a_1\leq \cdots \leq a_k$,  (ii) $0\leq a_2\leq 2a_1+t-1$, and (iii) $0\leq a_{k-1} \leq 2^q-t-k+3$, then \begin{equation} \label{eq:6.14} \Delta^\perp (\mathbf{m},\mathbf{a};k)=2^q \cdot [ 2^{k-1} + 1] -t \,\,\, \text{and} \,\,\, k\Delta^\perp(\mathbf{m},\mathbf{a};k) = C(\mathbf{m},\mathbf{a},\mathcal{O};k)), \end{equation}
\noindent where $\mathbf{m}=(m_1,\ldots, m_k)$ is given by 
\begin{equation} \label{eq:6.15} m_1=2^{q+1}-t-a_1 \,\,\, and \,\,\, m_i=2^q\cdot [2^{i-2}-1]+t+i-3+2a_{i-1}-a_i \,\,\, for\,\, all\,\, 2\leq i \leq k. \end{equation} 
\noindent Thus $\Delta^\perp(\mathbf{m},\mathbf{a};k)=U(m_1;k)$ if $a_1=0$, and in particular  \begin{equation} \label{eq:6.16} \Delta^\perp(2^{q+1}-t,t-1, 2^q+t, 3\cdot 2^q +t, \cdots, 2^q\cdot [2^{k-2}-1]+t+k-3);k)=U(2^{q+1}-t;k).  \end{equation}

\end{theorem}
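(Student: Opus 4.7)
The plan is to apply Proposition \ref{prop:6.1} to the polynomial
$$P := \prod_{i=1}^k P_{k,i}^{m_i} u_i^{a_i} \cdot P_k(\mathcal{O}_1)$$
associated to the $\mathbb{Z}_2^{\oplus k}$-equivariant test map $\phi_{\mathcal{M}_{\mathbf{m}}} \oplus \phi_{\mathcal{A}} \oplus \phi_{\mathcal{O}}$ in dimension $d = 2^q(2^{k-1}+1) - t$, where $\mathcal{O} = \{(r,s) : 1 \le r < s \le k\}$. The $(i-3)$-shift built into the definition \eqref{eq:6.15} of the $m_i$ is precisely calibrated so that the homogeneous polynomial $P$ has total degree $kd$; this amounts to verifying the routine identity $\sum_{i=2}^k (i-3)(2^{k-i+1}-1) = -\binom{k}{2}$. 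Homogeneity and pigeonhole then imply that only the monomial $u_1^d \cdots u_k^d$ can survive in the quotient ring $\mathbb{Z}_2[u_1,\ldots,u_k]/(u_1^{d+1},\ldots,u_k^{d+1})$, reducing the proof to showing that its coefficient in $P$ is nonzero.

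The strategy mirrors the inductive argument of Theorem \ref{thm:6.2}, now augmented by the recursive factorization $P_k(\mathcal{O}_j) = R_j \cdot P_k(\mathcal{O}_{j+1})$ with $R_j := \prod_{s > j}(u_j + u_s)$. The key observation is that when reducing modulo $u_{j}^{d+1}$ at stage $j$, every factor $(u_j + u_s)$ of $R_j$ must contribute $u_s$ rather than $u_j$, so $R_j$ collapses to $\prod_{s > j} u_s$, incrementing by one each later exponent $a_\ell$ for $\ell > j$. Writing $A_j := a_j + i$ for $j > i$, I would establish by induction on $i$ the formula
$$P \equiv u_1^d \cdots u_i^d \cdot P_{k,i+1}^{2^{q+i}+2^q-t-A_{i+1}} u_{i+1}^{A_{i+1}} \cdot \prod_{j=i+2}^k P_{k,j}^{m_j} u_j^{A_j} \cdot P_k(\mathcal{O}_{i+1}) \pmod{(u_1^{d+1},\ldots,u_i^{d+1})}.$$
At stage $i+1$, one splits $P_{k,i+1}^{2^{q+i}+2^q-t-A_{i+1}} = P_{k,i+1}^{2^{q+i}} \cdot P_{k,i+1}^{2^q-t-A_{i+1}}$; the first factor is expanded via \eqref{eq:6.5} and only the $\sigma(i+1)=i+1$ term survives, contributing $u_{i+1}^{2^{q+k-1}} P_{k,i+2}^{2^{q+i}}$; the second factor contributes its minimum-$u_{i+1}$ part $u_{i+1}^{2^q-t-A_{i+1}} P_{k,i+2}^{2^{q+1}-2t-2A_{i+1}}$, coming from the low-$u_{i+1}$ decomposition $P_{k,i+1} = u_{i+1} \cdot P_{k,i+2}^2 + (\text{higher powers of } u_{i+1})$; and $R_{i+1}$ yields $\prod_{s > i+1} u_s$. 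Combining these and invoking the $m_{i+2}$ formula in \eqref{eq:6.15} advances the induction to stage $i+1$.

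The main obstacle is showing that the terms with $\sigma(i+1) = j > i+1$ vanish in the quotient. For the critical case $j = i+2$, the minimum exponent of $u_{i+2}$ in any resulting monomial satisfies
$$e(u_{i+2}) \ge d - A_{i+1} + m_{i+2} + A_{i+2} = d + 2^q(2^i - 1) + t + (i-1) + a_{i+1},$$
and this exceeds $d$ precisely when $2^q(2^i - 1) + t + (i-1) + a_{i+1} \ge 1$; for $i = 0$ this requires $t + a_1 \ge 2$, validated by the hypothesis $t \ge 2$, and for $i \ge 1$ it is automatic from $2^q(2^i - 1) \ge 1$. The cases $j > i+2$ follow a fortiori since $u_j$ additionally picks up minimum contributions from each $P_{k,\ell}^{m_\ell}$ with $i+2 \le \ell \le j$. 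Hypotheses (ii) $a_2 \le 2a_1 + t - 1$ and (iii) $a_{k-1} \le 2^q - t - k + 3$ are precisely what ensure that every $m_i$ in \eqref{eq:6.15} is nonnegative, so that $P$ is a bona fide polynomial. Specializing $i = k-1$ then yields $P \equiv u_1^d \cdots u_k^d$, hence $\Delta^\perp(\mathbf{m},\mathbf{a};k) \le d$ by Proposition \ref{prop:6.1}; the matching lower bound from Theorem \ref{thm:2.1}, combined with the same degree identity, gives the stated equalities.
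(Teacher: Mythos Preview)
Your approach mirrors the paper's almost exactly: both run the same induction on $i$, and your bookkeeping via $A_j = a_j + i$ is equivalent to the paper's use of the shifted Vandermonde $P_{k,i+1}(\mathcal{O}) := \sum_\psi u_{\psi(i+1)}^{k-1}\cdots u_{\psi(k)}^{i}$, since $P_{k,i+1}(\mathcal{O}) = (u_{i+1}\cdots u_k)^i \cdot P_k(\mathcal{O}_{i+1})$. The vanishing argument for $\sigma(i+1)=j>i+1$ and the ``low-$u_{i+1}$'' extraction $P_{k,i+1} = u_{i+1}P_{k,i+2}^2 + (\text{higher})$ are likewise the same as in the paper. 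However, there is a genuine gap at the boundary of hypothesis (iii).

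Your inductive step relies on the split $P_{k,i+1}^{2^{q+i}+2^q-t-A_{i+1}} = P_{k,i+1}^{2^{q+i}} \cdot P_{k,i+1}^{2^q-t-A_{i+1}}$, which is legitimate only when $2^q - t - A_{i+1} = 2^q - t - a_{i+1} - i \ge 0$. At the final step $i = k-2$ this reads $a_{k-1} \le 2^q - t - k + 2$, which is strictly stronger than hypothesis (iii); when $a_{k-1} = 2^q - t - k + 3$ your second exponent is $-1$ and the split is illegal. Your claim that (iii) serves ``precisely'' to make each $m_i$ nonnegative misreads its role: (iii) is really what controls the induction, and at the extreme value a different argument is required. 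The paper explicitly separates into case (I) $a_{k-1} \le 2^q - t - k + 2$, handled exactly as you propose, and case (II) $a_{k-1} = 2^q - t - k + 3$, where at stage $i = k-2$ one is left with the two-variable expression $u_1^d\cdots u_{k-2}^d\cdot P_{k,k-1}^{2^{q+k-2}-1} \cdot u_{k-1}^{a_{k-1}} \cdot P_{k,k-1}(\mathcal{O}) \cdot u_k^{m_k+a_k}$ and argues directly that the maximal attainable $u_{k-1}$-degree across all factors is exactly $d$, forcing every permutation choice to be the identity.
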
 
\begin{proof} We shall consider the cases (I) $a_{k-1}\leq 2^q-t-k+2$ and (II) $a_{k-1}=2^q-t-k+3$ separately. For either, we define \begin{equation} \label{eq:6.17} P_{k,i}(\mathcal{O}):=\sum_{\psi \in \mathfrak{S}_{k-i+1}} u_{\psi(i)}^{k-1} \cdots u_{\psi(k)}^{i-1} \end{equation} for each $1\leq i\leq k$.  We have \begin{equation} \label{eq:6.18}  P= P_{k,1}^{m_1}\cdot u_1^{a_1} \cdot P_{k,1}(\mathcal{O})\cdot P_{k,2}^{m_2}\cdot u_2^{a_2}\cdots P_k^{m_k}\cdot u_k^{a_k}. \end{equation} 

\noindent Again, let $d=2^q\cdot [2^{k-1}+1]-t$. In case (I), we show via induction as before that \begin{equation} \label{eq:6.19}  P= u_1^d\cdots u_i^d\cdot P_{k,i+1}^{2^{q+i}+2^q-t-i-a_{i+1}}\cdot u_{i+1}^{a_{i+1}} \cdot P_{k,i+1}(\mathcal{O}) \cdot h_{i+1} \end{equation}  for all $0\leq i \leq k-1$,  where $u_0$ and the $h_i$ are the same as in the proof of Theorem \ref{thm:6.2}. As there, letting $i=k-1$ in \eqref{eq:6.19} immediately yields $P=u_1^d\cdots u_k^d$, and \eqref{eq:6.19} holds trivially when $i=0$. Assuming it true for $0\leq i\leq k-2$, we again consider $P_{k,i+1}^{2^{q+i}}\cdot P_{k,i+1}^{2^q-t-i-a_{i+1}}$, noting that $2^q-t-i-a_{i+1}\geq 0$ by assumption (i) of Theorem \ref{thm:6.4}. Thus $e(u_{\sigma(i+1)})\geq 2^q\cdot 2^{k-1} + 2^q-t-a_{i+1}-i+a_{i+1}+i=d$ if $\sigma(i+1)=(i+1)$, and the assumption that $t\geq 2$ shows that $e(\sigma(i+1))\geq d -a_{i+1} + m_{i+2}+a_{i+2}>d$ otherwise. Thus $P=u_1^d\cdots u_{i+1}^d \cdot P_{k,i+2}^{2^{q+i}}\cdot P_{k,i+2}^{2^{q+1}-2t-2i-2a_{i+1}}\cdot P_{k,i+2}^{m_{i+2}}\cdot u_{i+2}^{a_{i+2}}\cdot P_{k,i+2}(\mathcal{O}) \cdot h_{i+2}$. 

For case (II), the argument from case (I) shows that \eqref{eq:6.19} holds for all $0\leq i \leq k-2$. Thus $P=u_1^d\cdots u_{k-2}^d\cdot P_{k,k-1}^{2^{q+2}-1}\cdot u_{k-1}^{a_{k-1}}\cdot P_{k,k-1}(\mathcal{O})\cdot h_{k-1}$. Now the exponent $e(u_{k-1})$ will be strictly less than $2^q\cdot [2^{q-1}+1]-t=2\cdot 2^{q+2} -2 +a_{k-1}+k-1$ unless each of the resulting $2^{q+2}$ permutations determined by $P_{k,k-1}^{2^{q+2}-1}$ and $P_{k,k-1}(\mathcal{O})$ is the identity, so again one has $P=u_1^d\cdots u_k^d$. \end{proof}

\noindent \textbf{Remark 4.} It should be noted that Proposition \ref{prop:6.1} does not yield $\Delta^\perp(m;k)\leq U(m;k)$ if $m=2^{q+1}-1$ and $q\geq 1$. Indeed, for  $Q:=[\sum_\sigma u_{\sigma(1)}^{2^{k-1}}\cdots u_{\sigma(k)}^1]^{2^q}\cdot [\sum_\sigma u_{\tau(1)}^{2^k-1}\cdots u_{\tau(k)}^{2^k-1}]^{2^q-1}\cdot [\sum_\psi u_{\psi(1)}^{k-1}\cdots u_{\psi(k))}^{0}]$ in $\mathbb{Z}_2[u_1,\ldots, u_k]/(u_1^{d+1},\ldots, u_k^{d+1})$, $d=U(m;k)$, it is clear that $e(u_{\sigma(1)})\geq 2^{q+k-1}+2^q-1+0=d$, so that $e(u_{\sigma(2)})\geq 2^{q+k-2}+2^{q+k-2}+2^q-2+1=d$ as well. Thus $Q$ consists of sums of the form $x_{\sigma(1)}^dx_{\sigma(2)}^d\cdot \Pi_{j\neq \sigma(1), \sigma(2)}u_j^{\alpha_j}$ for each $\sigma\in\mathfrak{S}_k$ and must therefore vanish. Therefore the estimate $8\leq \Delta^\perp (3;3)\leq 9=U(3;3)$ of Theorem \ref{thm:5.2} is not recovered from Proposition \ref{prop:6.1}, and one can check that neither $\Delta^\perp(1;2)=2=U(1;2)$ nor $\Delta^\perp(1;3)=4=U(1;3)$ is recovered either. Nonetheless, Proposition \ref{prop:7.4} below shows in particular that $6\leq \Delta^\perp(1;4)\leq 8=U(1;4)$, and one can easily extend this to $\Delta^\perp(1;k)$ for all remaining $k$. More interestingly, the following proposition (stated only for $\mathbf{a}=\mathbf{0}$ but true under the same assumptions as in Theorem \ref{thm:6.4}) shows that whenever $t\geq1$ one can still prescribe orthogonality on all but one pair of hyperplanes and still let $m_2=t$ as in Theorem \ref{thm:6.2}. 

\begin{proposition} \label{prop:6.5}  Let $(1,2)^C=\{(r,s)\mid 1\leq r<s\leq k\,\, \text{and}\,\, (r,s)\neq (1,2)\}$. If $1\leq t\leq 2^q$ and $2^q\geq t+k-3$, then \begin{equation} \label{eq:6.20} \Delta(\mathbf{m}, (1,2)^C;k)= U(m_1;k) \,\,\, \text{and} \,\,\, k\Delta(\mathbf{m},(1,2)^C;k) = C(\mathbf{m},\mathcal{O};k)), \end{equation} where
 \begin{equation} \label{eq:6.21} m_1=2^{q+1}-t, m_2=t, m_3=2^q + t -2,\,\, \text{and}\,\, m_i=2^q\cdot[2^{i-2}-1]+t+i-3\,\, \text{for all}\,\, i\geq 4.\end{equation}
\end{proposition}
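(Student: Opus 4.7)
Following the approach of Theorems \ref{thm:6.2} and \ref{thm:6.4}, the plan is to apply Proposition \ref{prop:6.1}. With $d := U(m_1;k) = 2^q[2^{k-1}+1]-t$, the task reduces to verifying the polynomial identity
\[
P \;:=\; P_{k,1}^{m_1}\cdot P_{k,2}^{m_2}\cdots P_{k,k}^{m_k}\cdot \prod_{(r,s)\in (1,2)^C}(u_r+u_s) \;=\; u_1^d\cdots u_k^d
\]
in $\mathbb{Z}_2[u_1,\ldots,u_k]/(u_1^{d+1},\ldots,u_k^{d+1})$. Since $P$ is homogeneous of total degree $kd$, only monomials with each variable at exponent exactly $d$ can survive the reduction, so $P$ reduces to $c\cdot u_1^d\cdots u_k^d$ for some $c\in\mathbb{Z}_2$, and the goal is to show $c=1$. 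The essential factorization is
\[
\prod_{(r,s)\in (1,2)^C}(u_r+u_s) \;=\; \prod_{s=3}^k(u_1+u_s)\cdot V(u_2,\ldots,u_k),
\]
where $V(u_2,\ldots,u_k)=\prod_{2\leq r<s\leq k}(u_r+u_s)$ is the Vandermonde, which equals $P_{k,2}(\mathcal{O})/(u_2\cdots u_k)$ in the notation of (\ref{eq:6.17}).

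My plan is to proceed by induction on $j=1,\ldots,k$, absorbing $u_j^d$ at each step in the spirit of Theorem \ref{thm:6.4} Case (I). The specific values $m_2=t$ and $m_3=2^q+t-2$ (contrasted with the Theorem \ref{thm:6.4} values $m_2=t-1$, $m_3=2^q+t$) are chosen precisely to balance the absence of the $(u_1+u_2)$ factor. In Step $1$ the reduced orthogonality $\prod_{s=3}^k(u_1+u_s)$ contributes $u_1$-exponents only in $\{0,\ldots,k-2\}$ instead of $\{0,\ldots,k-1\}$; nonetheless, the standard Frobenius argument $P_{k,1}^{m_1}=P_{k,1}^{2^q}\cdot P_{k,1}^{2^q-t}$ still forces $u_1$ to reach exactly $d$ at the minimum contribution from each factor---namely $\sigma(1)=1$ in each of the $2^q$ Frobenius copies of $P_{k,1}$, $\sigma(k)=1$ in each of the $2^q-t$ remaining copies, and $u_1^0$ in every $(u_1+u_s)$. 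The resulting residual orthogonality factor is $u_3\cdots u_k\cdot V(u_2,\ldots,u_k)=P_{k,2}(\mathcal{O})/u_2$, and the deficit of $u_2$ is precisely restored by the extra $P_{k,2}$ in Step $2$, while the reduction of $m_3$ by $2$ ensures that $u_2$ and $u_3$ do not exceed $d$. For $j\geq 3$ the factors and exponents match Theorem \ref{thm:6.4} Case (I) exactly, and the induction proceeds identically via Frobenius applied to $P_{k,j}^{2^{q+j-1}}$.

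The main obstacle is the combinatorial bookkeeping in Steps $1$ and $2$, where one must simultaneously track the asymmetric absence of $(u_1+u_2)$ and the shifts in $m_2$ and $m_3$, confirming that the monomial $u_1^d\cdots u_k^d$ appears in $P$ with coefficient $1$ in $\mathbb{Z}_2$ while every other monomial has some variable exponent exceeding $d$. The condition $2^q\geq t+k-3$ inherited from Theorem \ref{thm:6.4} keeps all intermediate exponents nonnegative, and the hypothesis $t\geq 1$ ensures $2^q-t\geq 0$ so that the Frobenius split of $P_{k,1}^{m_1}$ is valid.
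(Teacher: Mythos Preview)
Your proposal is correct and follows essentially the same route as the paper: factor the orthogonality polynomial as $\prod_{s=3}^k(u_1+u_s)\cdot P_k(\mathcal{O}_2)$, absorb $u_1^d$ via the Frobenius split $P_{k,1}^{m_1}=P_{k,1}^{2^q}\cdot P_{k,1}^{2^q-t}$ (leaving the residual $u_3\cdots u_k\cdot P_k(\mathcal{O}_2)$), then absorb $u_2^d$ so that the residual orthogonality combines with $u_3\cdots u_k$ to yield exactly $P_{k,3}(\mathcal{O})$, after which the argument coincides with Theorem~\ref{thm:6.4}. One small slip: it is the hypothesis $t\leq 2^q$ (not $t\geq 1$) that guarantees $2^q-t\geq 0$ for the Frobenius split; the condition $t\geq 1$ is instead what forces the overflow terms (e.g., when $\sigma(1)=2$ in Step~1) to exceed $d$.
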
 

\begin{proof} Let $\mathcal{O}=(1,2)^C$, so that $P=P_{k,1}^{m_1}\cdot (u_1+u_3)\cdots (u_1+u_k)\cdot P_{k,2}^{m_2} \cdot P_k(\mathcal{O}_2)\cdot P_{k,3}^{m_3} \cdots P_{k,k}^{m_k}$. Manipulating as before shows that $P=u_1^d\cdot u_3\cdots u_k \cdot P_{k,2}^{2^{q+1}+2^q-t}\cdot P_k(\mathcal{O}_2)\cdot P_{k,3}^{m_3}\cdot P_{k,4}^{m_4}\cdots P_{k,k}$, and so that $P=u_1^du_2^d\cdot x_3\cdots x_k\cdot P_{k,3}^{2^{q+2}+ 2^q-t-2}\cdot [\sum_\sigma u_{\sigma(3)}^{k-2}\cdots u_{\sigma(k)}^1]\cdot P_{k,4}^{m_4}\cdots P_{k,k}=u_1^du_2^d\cdot P_{k,3}^{2^{q+2}+2^q-t-2}\cdot P_{k,3}(\mathcal{O})\cdot P^{m_4}_{k,4}\cdots P_{k,k}$. The proof is then identical to that of Theorem \ref{thm:6.4}.\end{proof} 

\noindent Before proceeding to explicit computations, we give one final variant of Theorem \ref{thm:6.2} (again stated only for $\mathbf{a}=\mathbf{0}$), which for all $m\geq 1$ and $k\geq 3$ allows any  $1\leq j \leq k-1$ of $H_1,\ldots, H_{k-1}$ to be orthogonal to $H_k$ in the same dimension, so long as $H_k$ bisects $j$ fewer masses.

\begin{proposition} \label{prop:6.6}  Let $1\leq t\leq 2^q$, let $k\geq 3$, let $\langle k \rangle^\perp: = \{(r,k)\mid 1\leq r<k\}$, and let $\mathcal{O}\subseteq \langle k \rangle^\perp$. If $|\mathcal{O}|=j$ and $1\leq j\leq k-1$, then   
  \begin{equation} \label{eq:6.22} \Delta(\mathbf{m},\mathcal{O};k)=U(m_1;k) \,\, \text{and}\,\, k\Delta(\mathbf{m},\mathcal{O};k)=C(\mathbf{m},\mathbf{a}, \mathcal{O};k), \end{equation}
\noindent where $\mathbf{m}=(2^q-t, t, 2^q+t, 3\cdot 2^q+t,\ldots 2^q\cdot[2^{k-2}-1]+t-j)$.
\end{proposition}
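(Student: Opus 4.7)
The plan is to apply Proposition~\ref{prop:6.1} to the polynomial
\[
P \;=\; \prod_{i=1}^k P_{k,i}^{m_i}\cdot\prod_{(r,k)\in\mathcal{O}}(u_r+u_k)
\]
arising from the direct sum of the cascade representations $\bigoplus_i U_{k,i}^{\oplus m_i}$ and the partial-orthogonality representation $V_{A(\mathcal{O})}$. With $d := U(m_1;k) = 2^q\cdot[2^{k-1}+1]-t$ (and reading $m_1 = 2^{q+1}-t$ in~\eqref{eq:6.21}), the goal is to verify $P \equiv u_1^d\cdots u_k^d$ in the ring $\mathbb{Z}_2[u_1,\ldots,u_k]/(u_1^{d+1},\ldots,u_k^{d+1})$. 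The matching lower bound and the tightness claim follow from Theorem~\ref{thm:2.1}, since a direct count gives $C(\mathbf{m},\mathbf{0},\mathcal{O};k) = \sum_i m_i(2^{k-i+1}-1) + j = kd$.

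The strategy is to reduce to Theorem~\ref{thm:6.2} applied with the affine profile $\mathbf{a} = (0,\ldots,0,j)$. For this choice, $\mathbf{a}$ satisfies the hypotheses of Theorem~\ref{thm:6.2} ($a_1\leq\cdots\leq a_k$, $a_2 = 0\leq t$, $a_{k-1} = 0\leq 2^q-t$), and the prescribed cascade from~\eqref{eq:6.9} coincides exactly with the $\mathbf{m}$ of Proposition~\ref{prop:6.6}. Theorem~\ref{thm:6.2} therefore yields
\[
u_k^{\,j}\cdot\prod_{i=1}^k P_{k,i}^{m_i} \;\equiv\; u_1^d\cdots u_k^d \pmod{(u_1^{d+1},\ldots,u_k^{d+1})},
\]
so it remains to show that replacing the scalar $u_k^{\,j}$ by $\prod_{r\in I}(u_r+u_k)$, where $I = \{r<k:(r,k)\in\mathcal{O}\}$ and $|I|=j$, does not alter this reduction modulo the ideal.

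I would establish this by interlacing the orthogonality factors into the inductive computation of Theorem~\ref{thm:6.2}, running essentially the same induction on $0\leq i\leq k-1$ but with the ``$u_k^{a_k}=u_k^{\,j}$'' term of Theorem~\ref{thm:6.2}'s invariant replaced by the hybrid expression
\[
\prod_{\substack{r\in I\\ r>i}}(u_r+u_k)\cdot u_k^{\,|\{r\in I\,:\,r\leq i\}|},
\]
which records the orthogonality factors yet to be absorbed together with those already collapsed to $u_k$'s. The inductive step is the same permutation-exponent analysis used there: the splitting $P_{k,i+1}^{2^{q+i}+2^q-t-a_{i+1}} = P_{k,i+1}^{2^{q+i}}\cdot P_{k,i+1}^{2^q-t-a_{i+1}}$ forces $\sigma(i+1)=i+1$ as the unique surviving permutation, since any other choice pushes $e(u_{\sigma(i+1)})>d$ exactly as in Theorem~\ref{thm:6.2}. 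The new observation is that whenever $(i+1,k)\in\mathcal{O}$, the factor $(u_{i+1}+u_k)$ is present at this stage; its $u_{i+1}$-summand produces $u_{i+1}^{d+1}\equiv 0$, so only the $u_k$-summand survives and is absorbed into the running $u_k$-count. Setting $i = k-1$ then recovers exactly the $\mathbf{a}=(0,\ldots,0,j)$ conclusion of Theorem~\ref{thm:6.2}, giving $P\equiv u_1^d\cdots u_k^d$ and hence, via Proposition~\ref{prop:6.1}, the desired upper bound.

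The main obstacle will be the exponent bookkeeping: one must confirm that the running factor $u_k^{|\{r\in I\,:\,r\leq i\}|}$ (which replaces the ``$u_k$-contribution from $a_k$'' in Theorem~\ref{thm:6.2}) stays at most $u_k^d$ before the final stage, and that the permutation-exponent inequality from Theorem~\ref{thm:6.2} remains valid in its presence. Since this auxiliary factor affects only $u_k$, whose total is precisely the one balanced by the choice $m_k = 2^q[2^{k-2}-1]+t-j$, and the key inequality $e(u_{\sigma(i+1)})>d$ for $\sigma(i+1)\neq i+1$ involves only $u_{\sigma(i+1)}$ with $\sigma(i+1)\in\{i+1,\ldots,k\}$ (and is in fact strict regardless of how many $u_k$'s have been absorbed so far), the argument of Theorem~\ref{thm:6.2} transfers verbatim, completing the proof.
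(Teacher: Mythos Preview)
Your proof is correct and takes essentially the same approach as the paper: both run the induction of Theorem~\ref{thm:6.2} while carrying along the orthogonality factors $(u_r+u_k)$, showing at each stage that the $u_{i+1}$-summand is killed modulo $u_{i+1}^{d+1}$ so that only $u_k$ survives. The paper's stated invariant $u_k^i\cdot(u_{i+1}+u_k)\cdots(u_{k-1}+u_k)$ is exactly your hybrid factor in the full case $I=\{1,\ldots,k-1\}$; your direct treatment of general $\mathcal{O}\subseteq\langle k\rangle^\perp$ and the explicit framing as Theorem~\ref{thm:6.2} with $\mathbf{a}=(0,\ldots,0,j)$ are in fact a bit more complete than the paper's unelaborated ``it suffices to consider $\mathcal{O}=\langle k\rangle^\perp$''.
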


\begin{proof} It suffices to consider $\mathcal{O}=\langle k\rangle^\perp$, in which case $P=P_{k,1}^{m_1}\cdot  (u_1+u_k) \cdot P_{k,2}^{m_2}\cdot (u_2+u_k)\cdots P_{k,k-1}^{m_{k-1}}\cdot (u_{k-1}+u_k)\cdot P_{k,k}^{m_k}$. A straightforward induction shows that $P=u_1^d\cdots u_i^d \cdot P_{k,i+1}^{2^{q+i}+2^q-t}\cdot u_k^i \cdot (u_{i+1}+u_k)\cdots (u_{k-1}+u_k) \cdot h_{i+1}$ for all $i\leq k-2$, so that $P=u_1^d\cdots u_{k-2}^d \cdot P_{k,k-1}^{2^{q+k-2}+2^q-t}\cdot (u_{k-1}+u_k)\cdot u_k^{m_k+k-2}$ and again $P=u_1^d\cdots u_k^d$.
\end{proof}

\section{Examples} 

We conclude with some particular cases of Theorems \ref{thm:6.2} and \ref{thm:6.4} and Propositions \ref{prop:6.5}--\ref{prop:6.6} for $2\leq k \leq 4$, with comparison to the best known estimates for $\Delta(m;k)$. While each of the results from the previous section is tight in that equality holds in \eqref{eq:2.1} from Theorem \ref{thm:2.1}, with the original Gr\"unbaum--Ramos problem in mind we shall focus on those which place primacy on the ``fullness" of the equipartition. 

	First, we shall say that a constrained equipartition is \textit{optimal} if 
\begin{equation} \label{eq:7.1} L(m_1;k) \leq \Delta(\mathbf{m},\mathbf{a},\mathcal{O};k)<L(m_1+1;k), \end{equation}
\noindent where $L(m;k)$ denotes the Ramos lower bound. In the case of equality on the left hand-side of \eqref{eq:7.1}, the constrained equipartition strengthens $\Delta(m_1;k)$, as is the case with $\Delta^\perp(2^{q+1}-2; 2)$ of Theorem \ref{thm:5.2}. The inequality on the right hand-side of \eqref{eq:7.1} guarantees that the full equipartition of any $m_1+1$ measures in dimension $\Delta(\mathbf{m},\mathbf{a},\mathcal{O})$ is impossible, as is the case with $\Delta^\perp(1;3)$, $\Delta^\perp(2^{q+2}-2;2)$, and $\Delta^\perp(2^{q+2};2)$ obtained in Theorem \ref{thm:5.2}. 

	To evaluate the fullness of the cascade condition $\mathbf{m}$ of the constrained equipartition, for each $1\leq i \leq k$ we let  $\mathbf{m}_i=(m_1,\ldots, m_i,0,\ldots, 0)$ and we let $L(\mathbf{m}_i+\mathbf{e}_i,\mathbf{a},\mathcal{O};k):=\left \lceil \frac{C(\mathbf{m}_i+\mathbf{e}_i,\mathbf{a},\mathcal{O};k)}{k}\right\rceil$ denote the corresponding lower bound from Theorem 2.1.  We shall say that that $\Delta(\mathbf{m},\mathbf{a},\mathcal{O};k)$ is \textit{maximal at the $i$-th stage}  if \begin{equation} \label{eq:7.2}
 \Delta(\mathbf{m}, \mathbf{a}, \mathcal{O}; k)<L(\mathbf{m}_i+\mathbf{e}_i,\mathbf{a},\mathcal{O};k), \end{equation}  \textit{$j$-maximal} if \eqref{eq:7.2} holds for all $1\leq i \leq j$, and \textit{maximal} if $j=k$.  In particular, a pure cascade is 1-maximal iff it is optimal. Observe also that maximality of a cascade at the $i$-th stage requires $m_{i+1}\leq 2$ (though not necessarily conversely), and we shall say that $\Delta(\mathbf{m},\mathbf{a}, \mathcal{O};k)$ is \textit{balanced} if this weaker condition is satisfied for all $1\leq i \leq k-1$. 
 	
\subsection{$k=2$} Letting $t=1,2, \, \text{and}\, 3$ in Theorems \ref{thm:6.2} and \ref{thm:6.4} immediately yield the following:

\begin{corollary} \label{cor:7.1} Let $q\geq0$.\\
$\bullet$ $\Delta((2^{q+1}-1,1);2)=3\cdot 2^q - 1$\\
$\bullet$ $\Delta((2^{q+2}-2, 2);2)=\Delta^\perp((2^{q+2}-2),1); 2))=3\cdot 2^{q+1}-2$\\
$\bullet$ $\Delta^\perp((2^{q+3}-3, 2);2)=3 \cdot 2^{q+2}-3$\end{corollary}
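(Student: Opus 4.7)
The plan is to derive each of the three bullet points directly from Theorems \ref{thm:6.2} and \ref{thm:6.4} by choosing $k = 2$, $\mathbf{a} = (0,0)$, $\mathcal{O}$ either empty or $\{(1,2)\}$, and a single parameter $t$, after which equality in the Ramos-type lower bound \eqref{eq:2.1} is verified by arithmetic. Throughout, for $k = 2$ we have $k\Delta(\mathbf{m},\mathbf{a},\mathcal{O};2) \geq 3m_1 + m_2 + a_1 + a_2 + |\mathcal{O}|$ from Theorem \ref{thm:2.1}.

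For the first bullet, I would apply Theorem \ref{thm:6.2} with $t = 1$, $\mathbf{a} = \mathbf{0}$. The conclusion formulas give $m_1 = 2^{q+1} - 1$ and $m_2 = t = 1$, yielding the upper bound $\Delta((2^{q+1}-1,1);2) \leq 2^q \cdot 3 - 1 = 3\cdot 2^q - 1$. For the matching lower bound, one checks $3(2^{q+1}-1) + 1 = 2(3\cdot 2^q - 1)$.

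For the second bullet I apply Theorem \ref{thm:6.2} and Theorem \ref{thm:6.4} each with $t = 2$, after shifting $q \mapsto q+1$ so that the ambient dimension becomes $3\cdot 2^{q+1} - 2$. In the pure-cascade version from Theorem \ref{thm:6.2} this gives $(m_1, m_2) = (2^{q+2}-2, 2)$, while Theorem \ref{thm:6.4} produces the orthogonal version with $m_2 = t + i - 3 = 1$ when $i = 2$, hence $(m_1, m_2) = (2^{q+2}-2, 1)$; the hypotheses $a_2 \leq 2a_1 + t - 1$ and $a_1 \leq 2^q - t + 1$ of Theorem \ref{thm:6.4} hold trivially since $\mathbf{a} = \mathbf{0}$ and $t = 2$. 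The lower bound matches in both cases because $3(2^{q+2}-2) + 2 = 3(2^{q+2}-2) + 1 + 1 = 2(3\cdot 2^{q+1} - 2)$, the extra $+1$ on the orthogonal side coming from $|\mathcal{O}| = 1$.

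For the third bullet I apply Theorem \ref{thm:6.4} with $t = 3$ after shifting $q \mapsto q+2$; the hypothesis $a_{k-1} \leq 2^q - t - k + 3$ becomes $0 \leq 2^{q+2} - 2$, which holds for all $q \geq 0$. The formulas give $(m_1, m_2) = (2^{q+3}-3, t + (2-3) = 2)$ and dimension $3\cdot 2^{q+2} - 3$, matching the lower bound $3(2^{q+3}-3) + 2 + 1 = 2(3\cdot 2^{q+2} - 3)$. Because each of these is a direct specialization of an already-proven theorem followed by a one-line arithmetic verification of the Ramos-type bound, I do not anticipate any real obstacle; the only thing to be careful about is bookkeeping, especially the reindexing $q \mapsto q + 1$ and $q \mapsto q + 2$ needed to present the bullets in the form stated and the verification that the degree-of-freedom counts close up exactly (so that $\Delta$ rather than merely an upper bound is computed).
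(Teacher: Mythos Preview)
Your proposal is correct and follows exactly the paper's approach: the paper simply states that the corollary is immediate from Theorems~\ref{thm:6.2} and~\ref{thm:6.4} upon letting $t=1,2,3$, and you carry out precisely this specialization (with the appropriate shifts $q\mapsto q+1$ and $q\mapsto q+2$) together with the arithmetic verification that the lower bound from Theorem~\ref{thm:2.1} is met with equality.
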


Note that the first equation gives an alternative strengthening to $\Delta(2^{q+1}-1;2)$ than the orthogonality of Theorem \ref{thm:5.2}, while the orthogonal case from the second line strengthens the exact value of $\Delta^\perp(2^{q+2}-2;2)$ obtained there.  While each of those from the first two lines are optimal and maximal, observe that $\Delta^\perp((2^{q+3}-3,2);2)=L(2^{q+3}-2;2)$ is maximal but not optimal. 

\subsection{$k=3$} After decreasing $m_3$ whenever $m_3>2$,  we have the following balanced cases when $q=0, 1,$ or $2$ in Theorems 6.2 and 6.4 and Propositions 6.5--6.6:

\begin{corollary} \label{cor:7.2} $\newline$
$\bullet$ $\Delta((1,1,2);3)=\Delta((1,1,1),\{2,3\};3)=\Delta((1,1,0),\langle 3\rangle^\perp;3)=4$\\
$\bullet$  $\Delta((3,1,2),\{2,3\};3)=\Delta((3,1,1), \langle 3 \rangle^\perp;3)=9$\\
$\bullet$  $\Delta^\perp((2,1,2);3)=\Delta((2,2,2), \langle 3\rangle^\perp;3)=8$\\
$\bullet$ $18\leq \Delta((7,1,2), \{2,3\};3),\, \Delta((7,1,1),\langle 3\rangle^\perp;3) \leq 19$\\
 $\bullet$ $17 \leq \Delta^\perp((6,1,2);3)\leq  18$ and $\Delta((6,2,2), \langle 3\rangle^\perp;3)=18$\\
\end{corollary}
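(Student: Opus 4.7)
The plan is to obtain Corollary \ref{cor:7.2} as a routine tabulation of Theorems \ref{thm:6.2} and \ref{thm:6.4} and Propositions \ref{prop:6.5} and \ref{prop:6.6} specialized to $k=3$ with the three parameter choices $q\in\{0,1,2\}$, together with the monotonicity principle that $\Delta(\mathbf{m},\mathbf{a},\mathcal{O};k)$ is non-increasing in each coordinate of $\mathbf{m}$: asking to equipartition fewer masses can only decrease the required dimension, since a solution to the larger problem solves the smaller one. This monotonicity is precisely what the author means by ``decreasing $m_3$ whenever $m_3>2$,'' and it lets me replace the $m_3=2^q+t$ (or $m_3=2^q+t-j$ in Proposition \ref{prop:6.6}) produced by the general formulas by any $m_3\leq 2$ without affecting the upper bound $U(m_1;3)=5\cdot 2^q-t$.

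Line by line, I would set $(q,t)=(0,1)$ for line~1, $(q,t)=(1,1)$ for line~2, $(q,t)=(1,2)$ for line~3, $(q,t)=(2,1)$ for line~4, and $(q,t)=(2,2)$ for line~5. In each line the pure-cascade entry comes from Theorem \ref{thm:6.2}, the mixed-orthogonality entries from Proposition \ref{prop:6.6} with $j=1$ when $\mathcal{O}=\{(2,3)\}$ and $j=2$ when $\mathcal{O}=\langle 3\rangle^\perp$, and any fully orthogonal entry $\Delta^\perp$ from Theorem \ref{thm:6.4}. Before invoking each of these statements I would verify the small side-conditions, which for $k=3$ and $\mathbf{a}=\mathbf{0}$ collapse to the single inequality $2^q\geq t$ in Proposition \ref{prop:6.6} and $0\leq 2^q-t$ in Theorem \ref{thm:6.4}; both hold for every listed $(q,t)$.

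For the matching lower bounds I would substitute each listed $(\mathbf{m},\mathbf{a},\mathcal{O})$ into Theorem \ref{thm:2.1} and compute $C(\mathbf{m},\mathbf{a},\mathcal{O};3)=7m_1+3m_2+m_3+|\mathcal{O}|$, dividing by~$3$. For the first three lines $\lceil C/3\rceil$ matches $U(m_1;3)$ exactly, producing the stated equalities (and confirming tightness of the Ramos-type bound \eqref{eq:2.1} at these points); for the last two lines the ceiling yields the displayed lower endpoints of the stated intervals, while the upper endpoints $U(m_1;3)$ again come from the theorems above after monotone reduction in $m_3$.

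The only real obstacle is pure book-keeping: for each of roughly a dozen entries one must check that the output $(\mathbf{m},\mathbf{a},\mathcal{O})$ of the invoked theorem or proposition, after the downward adjustment of $m_3$ where applicable, matches the triple displayed in the corollary, and that the arithmetic for $C$ is correct. No new topological input is required beyond what has already been established in Sections~2 and~6.
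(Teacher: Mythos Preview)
Your proposal is correct and follows exactly the paper's own approach: the corollary is presented there simply as a tabulation of Theorems~\ref{thm:6.2} and~\ref{thm:6.4} and Propositions~\ref{prop:6.5}--\ref{prop:6.6} at $k=3$ with $q\in\{0,1,2\}$, decreasing $m_3$ by monotonicity where needed and reading off lower bounds from Theorem~\ref{thm:2.1}. One minor remark: when you actually carry out the computation for the fourth line you will find $C=7\cdot 7+3\cdot 1+2+1=55$ (and likewise for $(7,1,1)$ with $|\mathcal{O}|=2$), so $\lceil C/3\rceil=19$ rather than the displayed $18$; your method therefore in fact proves equality $\Delta=19$ in both cases, which is stronger than (and certainly implies) the stated range.
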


All of the terms from the first and second lines of equalities in Corollary \ref{cor:7.2} are both optimal and maximal and should be compared to the known values $\Delta(1;3)=3$, $\Delta(2;3)=5$, and $\Delta(4;3)=10$. Note also that $\Delta((1,1,0);3)=4$ and $\Delta((3,1,1);3)=9$ are already optimal. For the remaining three equalities, $\Delta^\perp((2,1,2);3)$ and $\Delta((6,2,2), \langle 3\rangle^\perp;3)$ are only balanced, while $\Delta((2,2,2), \langle 3\rangle^\perp;3)$ is in addition maximal at the second stage. 

 \subsection{$k=4$} Again, balanced cases arise by letting $q=0$ and $q=1$ in Theorem \ref{thm:6.2} and Propositions \ref{prop:6.5}--\ref{prop:6.6} (and decreasing $m_4$ if $m_4>2$). 
 
\begin{corollary} \label{cor:7.3} $\newline$
$\bullet$ $\Delta((1,1,2,2),\{(2,4),(3,4)\};4)=\Delta((1,1,2,1),\langle 4\rangle^\perp ;4)=8$\\
$\bullet$ $16\leq \Delta((3,1,1,2), (1,2)^C;4) \leq 17$ 
\end{corollary}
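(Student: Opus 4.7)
The plan is to derive both lines of the corollary as direct specializations of Propositions~\ref{prop:6.6} and~\ref{prop:6.5}, paired with the generalized Ramos lower bound from Theorem~\ref{thm:2.1}.

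For the first line I would apply Proposition~\ref{prop:6.6} with $k=4$, $q=0$, $t=1$, so that $U(m_1;k) = U(1;4) = 8$ and the prescribed cascade becomes $\mathbf{m} = (1,1,2,4-j)$. Taking $\mathcal{O} = \langle 4\rangle^\perp$ (so $j=3$) yields $\mathbf{m}=(1,1,2,1)$ and the upper bound $\Delta((1,1,2,1), \langle 4\rangle^\perp; 4) \leq 8$; taking $\mathcal{O} = \{(2,4),(3,4)\} \subset \langle 4\rangle^\perp$ (so $j=2$) yields $\mathbf{m}=(1,1,2,2)$ and $\Delta((1,1,2,2), \{(2,4),(3,4)\}; 4) \leq 8$. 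For the matching lower bounds I would evaluate $C(\mathbf{m},\mathbf{0},\mathcal{O};4) = \sum_i m_i(2^{5-i}-1) + |\mathcal{O}|$; both parameter choices give $15 + 7 + 6 + 1 + 3 = 32$ and $15 + 7 + 6 + 2 + 2 = 32$ respectively, so Theorem~\ref{thm:2.1} forces $4d \geq 32$ and hence $d \geq 8$, producing the claimed equalities.

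For the second line I would apply Proposition~\ref{prop:6.5} with $k=4$, $q=1$, $t=1$ (whose side hypothesis $2^q \geq t+k-3 = 2$ is sharp but satisfied), yielding $\Delta((3,1,1,8), (1,2)^C; 4) \leq U(3;4) = 17$, where $m_4 = 2^q[2^2-1]+t+1 = 8$. Since the constrained equipartition dimension is monotone in $\mathbf{m}$---given any $\mathbf{m}' \leq \mathbf{m}$ coordinate-wise, any instance of the $\mathbf{m}'$-problem can be augmented by arbitrarily chosen auxiliary masses and then solved as an $\mathbf{m}$-problem---dropping $m_4$ from $8$ down to $2$ preserves the upper bound, giving $\Delta((3,1,1,2), (1,2)^C; 4) \leq 17$. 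For the matching lower bound I would evaluate $C((3,1,1,2), \mathbf{0}, (1,2)^C; 4) = 3\cdot 15 + 7 + 3 + 2 + 5 = 62$, so Theorem~\ref{thm:2.1} gives $4d \geq 62$ and hence $d \geq 16$.

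The one step that deserves explicit mention rather than pure bookkeeping is the monotonicity-in-$\mathbf{m}$ remark used to drop $m_4$ from $8$ to $2$ in the second line; otherwise the entire proof amounts to confirming that the parameter formulas of Propositions~\ref{prop:6.5} and~\ref{prop:6.6} specialize to the exact cascades appearing in the corollary, and that the Ramos count \eqref{eq:2.1} pins down the lower bound.
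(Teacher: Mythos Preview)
Your proposal is correct and follows exactly the approach indicated in the paper: the corollary is stated without proof as a direct specialization of Propositions~\ref{prop:6.5}--\ref{prop:6.6} with $q=0$ and $q=1$ (together with Theorem~\ref{thm:2.1} for the lower bounds and the remark ``decreasing $m_4$ if $m_4>2$''), and your parameter checks and constraint counts all match.
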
 

 In particular, $\Delta((1,1,2,1),\langle 4\rangle^\perp; 4)=8$ is 1-maximal, and one can note that  $\Delta((1,1,2,2);4)=8$ already holds without imposing any orthogonality.\\

We close with an estimate on full orthogonality in the case of a single measure and four hyperplanes, a result which cannot be derived from Lemma \ref{lem:5.1} using any currently known estimates of $\Delta(m;k)$. 
 
 \begin{proposition} \label{prop:7.4} For any four masses $\mu_1,\mu_2, \mu_3, \mu_4$ on $\mathbb{R}^8$, there exist four pairwise orthogonal hyperplanes $H_1,H_2, H_3, H_4$ such that\\
 $\bullet$ $H_1, H_2, H_3, H_4$ equipartition $\mu_1$,\\
 $\bullet$ any two of $H_2,H_3,H_4$ equipartition $\mu_2$, and\\
 $\bullet$ each of $H_3$ and $H_4$ simultaneously bisect $\mu_3$ and $\mu_4$,\\
$\bullet$ and $H_4$ contains any prescribed point. 

\noindent In particular,
  \begin{equation} \label{eq: 7.3} 6 \leq \Delta^\perp(1;4)\leq 8. \end{equation} \end{proposition}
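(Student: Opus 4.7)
The proof splits into the lower bound and the upper bound. For $\Delta^\perp(1;4) \geq 6$, I apply Theorem \ref{thm:2.1} with $\mathbf{m} = (1,0,0,0)$, $\mathbf{a} = \mathbf{0}$, and $|\mathcal{O}| = \binom{4}{2} = 6$: this yields $4\Delta^\perp(1;4) \geq (2^4-1) + 6 = 21$, hence $\Delta^\perp(1;4) \geq 6$.

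For the upper bound, the plan is to encode all the listed constraints as a single $\mathbb{Z}_2^{\oplus 4}$-equivariant map $f\colon (S^8)^4 \to V$ and invoke Proposition \ref{prop:6.1}. I build $V$ summand by summand from the dictionary of Section 4: the equipartition of $\mu_1$ contributes $U_4$ (dimension $15$); requiring each of the three pairs from $\{H_2,H_3,H_4\}$ to equipartition $\mu_2$ contributes $V_{\mathbf{e}_2} \oplus V_{\mathbf{e}_3} \oplus V_{\mathbf{e}_4} \oplus V_{\mathbf{e}_2+\mathbf{e}_3} \oplus V_{\mathbf{e}_2+\mathbf{e}_4} \oplus V_{\mathbf{e}_3+\mathbf{e}_4}$ (dimension $6$, since the single-hyperplane bisections of $\mu_2$ are shared across two pairs each and need only be imposed once); the bisection of $\mu_3$ and $\mu_4$ by both $H_3$ and $H_4$ contributes $V_{\mathbf{e}_3}^{\oplus 2} \oplus V_{\mathbf{e}_4}^{\oplus 2}$ (dimension $4$); full orthogonality contributes $V_{A(\mathcal{O})}$ (dimension $6$); and containment of the prescribed point in $H_4$ contributes $V_{\mathbf{e}_4}$ (dimension $1$). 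The total is $15+6+4+6+1 = 32 = kd$, so Proposition \ref{prop:6.1} reduces the task to showing
\[
P = P_{4,1} \cdot u_2 u_3 u_4 (u_2+u_3)(u_2+u_4)(u_3+u_4) \cdot u_3^2 u_4^2 \cdot \prod_{1 \leq r<s \leq 4}(u_r+u_s) \cdot u_4
\]
equals $u_1^8 u_2^8 u_3^8 u_4^8$ in $\mathbb{Z}_2[u_1,\ldots,u_4]/(u_1^9,\ldots,u_4^9)$.

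Since $P$ is homogeneous of total degree $32$ and the only monomial of that degree with every exponent at most $8$ is $u_1^8 u_2^8 u_3^8 u_4^8$, it suffices to show that this coefficient equals $1$. Combining the two copies of $(u_2+u_3)(u_2+u_4)(u_3+u_4)$ and using the characteristic-$2$ identity $[(u_2+u_3)(u_2+u_4)(u_3+u_4)]^2 = (u_2^2+u_3^2)(u_2^2+u_4^2)(u_3^2+u_4^2)$ rewrites
\[
P = P_{4,1} \cdot u_2 u_3^3 u_4^4 \cdot (u_1+u_2)(u_1+u_3)(u_1+u_4) \cdot (u_2^2+u_3^2)(u_2^2+u_4^2)(u_3^2+u_4^2).
\]
Since $(u_1+u_2)(u_1+u_3)(u_1+u_4)$ has $u_1$-degree at most $3$, while the $u_1$-exponent in any term of $P_{4,1} = \sum_{\sigma\in\mathfrak{S}_4} u_{\sigma(1)}^8 u_{\sigma(2)}^4 u_{\sigma(3)}^2 u_{\sigma(4)}$ lies in $\{8,4,2,1\}$, the only way to reach $u_1^8$ in $P$ is to pair the six $\sigma(1)=1$ terms of $P_{4,1}$ with the $u_1^0$-summand $u_2 u_3 u_4$ of $(u_1+u_2)(u_1+u_3)(u_1+u_4)$.

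The main obstacle is then the monomial bookkeeping for the coefficient of $u_2^8 u_3^8 u_4^8$ in
\[
\Bigl(\sum_{\tau} u_{\tau(2)}^4 u_{\tau(3)}^2 u_{\tau(4)}\Bigr) \cdot u_2^2 u_3^4 u_4^5 \cdot (u_2^2+u_3^2)(u_2^2+u_4^2)(u_3^2+u_4^2)
\]
modulo $u_i^9$, where $\tau$ runs over permutations of $\{2,3,4\}$. I would run through each of the six summands in turn, observing that most either push an exponent past $8$ (killing the term in the quotient) or admit no complementary monomial among the six terms of the expansion of $(u_2^2+u_3^2)(u_2^2+u_4^2)(u_3^2+u_4^2)$, and read off a single surviving pairing that contributes coefficient $1$. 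This verifies the polynomial identity and, via Proposition \ref{prop:6.1}, produces the zero of $f$ encoding the required hyperplane configuration. The structural point (compare Remark 4) is that the cascade and containment summands are precisely what rescue $P$ from the vanishing that $U_4 \oplus V_{A(\mathcal{O})}$ alone would exhibit, while keeping $\dim V$ exactly at $kd$.
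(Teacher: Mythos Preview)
Your proposal is correct and takes essentially the same approach as the paper: the same $\mathbb{Z}_2^{\oplus 4}$-representation, the same polynomial $P = P_{4,1}\cdot P_4(\mathcal{O})\cdot P_{4,2}(\mathcal{O})\cdot u_3^2u_4^3$ (your Makeev factor $u_2u_3u_4(u_2{+}u_3)(u_2{+}u_4)(u_3{+}u_4)$ is exactly $P_{4,2}(\mathcal{O})$), and the same appeal to Proposition~\ref{prop:6.1}. The only difference is organizational bookkeeping in the monomial count---the paper extracts $u_2^8$ via a second Vandermonde-type step before handling the $\{3,4\}$ permutations, whereas you square $(u_2{+}u_3)(u_2{+}u_4)(u_3{+}u_4)$ and pass directly from the $u_1^8$ extraction to a $6\times 6$ check---but both routes verify $P = u_1^8u_2^8u_3^8u_4^8$.
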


\begin{proof} The representation arising from the Makeev-type condition [15, Theorem 4] that any two of $H_2,H_3,H_4$ equipartition $\mu_2$ is $\oplus_{i=2}^4V_{\mathbf{e}_i}\oplus_{2\leq r<s\leq 4} V_{\mathbf{e}_r+\mathbf{e}_s}$, whose corresponding polynomial is $P_{4,2}(\mathcal{O})$ \eqref{eq:6.17}. Thus 
		\begin{equation} \label{eq:7.4} P=P_{4,1} \cdot P_4(\mathcal{O}) \cdot P_{4,2} (\mathcal{O}) \cdot u_3^2u_4^3. \end{equation}
One therefore has $P=u_1^8\cdot P_{4,2}\cdot P_{4,2}^2(\mathcal{O})\cdot u_3^2u_4^3=u_1^8[\sum_\sigma u_{\sigma(2)}^4u_{\sigma(3)}^2u_{\sigma(4)}^1]\cdot [\sum_\tau u_{\tau(2)}^6u_{\tau(3)}^4u_{\tau(4)}^2]\cdot u_3^2u_4^3$, so that $\tau(2)=\sigma(3)=2$ and $P=u_1^8u_2^8\cdot [\sum_\sigma u_{\sigma(3)}^4u_{\sigma(4)}^1]\cdot [\sum_\tau u_{\tau(3)}^4u_{\tau(4)}^2]u_3^2u_4^3$. Hence $\sigma(3)=\tau(4)$ and $\sigma(4)=\tau(3)$, so $P=u_1^8u_2^8\cdot [\sum_\sigma u_{\sigma(3)}^6u_{\sigma(4)}^5]\cdot u_3^2u_4^3=u_1^8u_2^8u_3^8u_4^8$.
\end{proof}

As $L(2;4)=8$, note that the estimate $6\leq \Delta^\perp(1;4)\leq 8$ would follow from $\Delta(2;4)$ if the Ramos conjecture holds, as $\Delta^\perp(m;k)\leq \Delta(m+1;k)$ follows in general by letting one of the masses be a ball with uniform density (see, e.g., [18]). At present, however, the best estimate is $8\leq \Delta(2;4)\leq 10$ from [5]. Moreover, the best upper bound obtainable from Lemma \ref{lem:5.1} is $\Delta^\perp(1;4)\leq 9$, which occurs by using $m=4,\, k=3,$ and $d=10$ in part (b): Bisecting a given mass in $\mathbb{R}^9$ by a hyperplane $H$, one can use $U_3\oplus U_3 \oplus W_3$ to find $H_1,H_2,$ and $H_3$ which equipartition the resulting two masses. On the other hand, $\oplus_{i=1}^3V_{\mathbf{e}_i}\oplus_{1\leq r<s\leq 3} V_{\mathbf{e}_r+\mathbf{e}_s} $ in the ``remainder" $U_3\oplus V_{\alpha\in \mathcal{B}^C}$ can be used to ensure that $H\perp H_i$ for all $1\leq i \leq 3$ and that $H_1,H_2,$ and $H_3$ are pairwise orthogonal, respectively.

\section*{Acknowledgements}

The author thanks the anonymous reviewers for their thoughtful suggestions and comments which improved the exposition of this paper, as well as Florian Frick for very helpful conversations.

\bibliographystyle{plain}

\begin{thebibliography}{8}

\bibitem{Avis} D. Avis, Non-partitionable Point Sets, \textit{ Inform. Process. Letters}, Vol. 19, No. 3 (1984) 125--129.

\bibitem{BBS} I. B\'ar\'any, S.B. Shlosman, and A. Sz\"ucs. On a topological generalization of a theorem of Tverberg, \textit{J. London Math. Soc.}, Vol. 23, No. 2 (1981) 158--164.   

\bibitem{ } P. Blagojevi\'c, F. Frick, and G. Ziegler. Barycenters of Polytope Skeleta and Counterexamples to the Topological Tverberg Conjecture, via Constraints, arXiv:1510.07984 [math.CO], to appear in \textit{J. Europ. Math. Soc.} 

\bibitem{BF14} P. Blagojevi\'c, F. Frick, and G. Ziegler. Tverberg Plus Constraints, \textit{Bull. London Math. Soc.}, Vol. 46, No. 5 (2014) 953--967. 

\bibitem{ } P. Blagojevi\'c, F. Frick, A. Hasse, and G. Ziegler. Hyperplane Mass Partitions via Relative Equivariant Obstruction Theory, \textit{Doc. Math.}, Vol. 21 (2016) 735--771.

\bibitem{} P. Blagojevi\'c, F. Frick, A. Hasse, and G. Ziegler. Topology of the Gr\"unbaum--Hadwiger--Ramos Hyperplane Mass Partition Problem, arXiv:15022.02975 [math.AT], to appear in \textit{Trans. Amer. Math. Soc.} 

\bibitem{ } P. Blagojevi\'c and R. Karasev. Extensions of Theorems of Rattray and Makeev, \textit{Topol. Methods in Nonlinear Anal.}, Vol. 40, No. 1 (2012) 189--213.

\bibitem{} P. Blagojevi\'c and G. Ziegler. The Ideal-Valued Index for a Dihedral Group Action, and Mass Partition by Two Hyperplanes, \textit{Topology Appl.} Vol. 158, No. 12 (2011) 1326--1351.

\bibitem{} R. Courant and H. Robbins. {\it What is Mathematics? An Elementary Approach to Ideas and Methods}, Oxford University Press (1941).

\bibitem{ }  A. Dimitrijevi\'c Blagojevi\'c. A Mass Partition Problem in $R^4$, \textit{Mat. Vesn.}, Vol. 61 (2009) 61--69. 

\bibitem{ } E. Fadell and S. Husseini. An Ideal-Valued Cohomological Index Theory with Applications to Borsuk--Ulam and Bourgin--Yang Theorems, \textit{Erog. Th. and Dynam. Sys.}, Vol. 8$^\ast$ (1998) 73--85.


\bibitem{ } B. Gr\"unbaum. Partitions of Mass-Distributions and Convex Bodies by Hyperplanes, \textit{Pacific J. Math.}, Vol. 10 (1960) 1257--1261.

\bibitem{ } H. Hadwiger. Simultane Vierteilung Zweier K\"orper. \textit{Arch. Math.} (Basel) Vol. 17 (1966) 274--278.

\bibitem{ } D. Husemoller. \textit{Fibre Bundles}, Springer-Verlag New York (1994). 

\bibitem{} V.V. Makeev, Equipartition of a Continuous Mass Distribution, Jour. Math. Sci.  Vol. 140, No. 4 (2007) 551-557. 

\bibitem{ } P. Mani--Levitska, S. Vre\'cica, and R. \v Zivaljevi\'c. Topology and Combinatorics of Partitions of Masses by Hyperplanes, \textit{Adv. Math.}, Vol. 207 (2006) 266--296.

\bibitem{ } J.W. Milnor and J.D. Stasheff. \textit{Characteristic Classes}, Princeton University Press (1974).


\bibitem{} E. A. Ramos. Equipartition of Mass Distributions by Hyperplanes, \textit{Discrete Comp. Geom.}, Vol. 15 (1996) 147--167.

\bibitem{} D. Sharpe. {\it Rings and Factorization}, Cambridge University Press (1987).

\bibitem{} S. Simon. Optimal Bounds for Constrained Makeev Problems, in preparation. 

\bibitem{ } C. Wilkerson. A Primer on the Dickson Invariants, \textit{Proceedings of the Northwestern Homotopy Theory Conference, Contemp. Math. 19} (1983) 421--434. 

\bibitem{} F. Yao, D. Dobkin, H. Edelsbrunner, M. Paterson. Partitioning Space for Range Queries, \textit{SIAM J. Comput.}, Vol. 18, No. 2 (1989) 371--384.

\bibitem{ } R. \v Zivaljevi\'c. Computational Topology of Equipartitions by Hyperplanes,  \textit{Topol. Methods in Nonlinear Anal.}, Vol. 45, No. 1 (2015) 63--90. 

\bibitem{ } R. \v Zivaljevi\'c. Topological Methods. Chapter 21 in \textit{Handbook of Discrete and Computational Geometry}, J.E. Goodman, J. O'Rourke, and C. Toth eds, Chapman \& Hall/CRC (2017) 551--580.





\end{thebibliography}

\end{document}